\numberwithin{equation}{section}
\newtheorem{Theorem}{Theorem}[section]
\newtheorem{Proposition}[Theorem]{Proposition}
\newtheorem{Conjecture}[Theorem]{Conjecture}
\newtheorem{Question}[Theorem]{Question}
 { \theoremstyle{definition}
\newtheorem{Definition}[Theorem]{Definition}
\newtheorem{Example}[Theorem]{Example}}
\begin{document}

\allowdisplaybreaks

\newcommand{\arXivNumber}{1909.13002}

\renewcommand{\PaperNumber}{044}

\FirstPageHeading

\ShortArticleName{Higher Rank $\hat{Z}$ and $F_K$}

\ArticleName{Higher Rank $\boldsymbol{\hat{Z}}$ and $\boldsymbol{F_K}$}

\Author{Sunghyuk PARK}

\AuthorNameForHeading{S.~Park}

\Address{California Institute of Technology, Pasadena, USA}
\Email{\href{mailto:spark3@caltech.edu}{spark3@caltech.edu}}

\ArticleDates{Received January 15, 2020, in final form May 11, 2020; Published online May 24, 2020}

\Abstract{We study $q$-series-valued invariants of $3$-manifolds that depend on the choice of a~root system $G$. This is a natural generalization of the earlier works by Gukov--Pei--Putrov--Vafa [arXiv:1701.06567] and Gukov--Manolescu [arXiv:1904.06057] where they focused on $G={\rm SU}(2)$ case. Although a full mathematical definition for these ``invariants'' is lacking yet, we define $\hat{Z}^G$ for negative definite plumbed $3$-manifolds and $F_K^G$ for torus knot complements. As in the $G={\rm SU}(2)$ case by Gukov and Manolescu, there is a surgery formula relating~$F_K^G$ to~$\hat{Z}^G$ of a Dehn surgery on the knot~$K$. Furthermore, specializing to symmetric representations, $F_K^G$ satisfies a recurrence relation given by the quantum $A$-polynomial for symmetric representations, which hints that there might be HOMFLY-PT analogues of these $3$-manifold invariants.}

\Keywords{$3$-manifold; knot; quantum invariant; complex Chern--Simons theory; TQFT; $q$-series; colored Jones polynomial; colored HOMFLY-PT polynomial}

\Classification{57K16; 57K31; 81R50}

\section{Introduction}
Categorification of the Chern--Simons theory is one of the most exciting open questions in quantum topology. While homology theories categorifying quantum link invariants are fairly well-understood by now, whether there is a homology theory categorifying the Witten--Reshetikhin--Tureav (WRT) invariants is still widely open. One approach to this problem comes from physics. Using physical intuition, Gukov, Putrov and Vafa \cite{GPV} and Gukov, Pei, Putrov and Vafa \cite{GPPV} conjectured that WRT invariants can be decomposed into categorifiable ``homological blocks'' often denoted by $\hat{Z}$. These are integer coefficient $q$-series and are supposed to be the graded Euler characteristic of a conjectural homological invariant $\mathcal{H}^{j,k}_{b,{\rm BPS}}$ so that
\begin{gather*}
\hat{Z}_b(Y;q) = \sum_{j,k}(-1)^j q^k \operatorname{rank}\mathcal{H}^{j,k}_{b,{\rm BPS}}(Y).
\end{gather*}
Furthermore, in~\cite{GPPV}, they gave a definition of $\hat{Z}$ for negative definite plumbed $3$-manifolds.

More recently, Gukov and Manolescu \cite{GM} studied an analog of $\hat{Z}$ for knot complements, denoted by $F_K(x,q) := \hat{Z}\big(S^3 \setminus K\big)$, where $x$ parametrizes the boundary condition, namely the holonomy eigenvalue along the meridian in the complex Chern--Simons theory. Their motivation was to study $\hat{Z}$ of $3$-manifolds described as Dehn surgery on knots, and in fact they demonstrated that $\hat{Z}$ behaves well under gluing pieces of $3$-manifolds along their toral boundaries.

As these exciting developments were focused on ${\rm SU}(2)$ as the gauge group, a natural question is whether they can be generalized to other gauge groups. One strong motivation for this question comes from the fact that, quantum ${\rm SU}(N)$ link invariants (and their categorifications) exhibit a nice regularity under the change of rank, which often can be lifted to an independent variable $a=q^N$ (the third grading), as in HOMFLY-PT polynomials (and HOMFLY-PT homology, as conjectured in~\cite{DGR}). As we show in this paper, $\hat{Z}$ and $F_K$ have higher rank analogues, and moreover there is a certain regularity under the change of rank, which hints that there might be HOMFLY-PT analogues of these $3$-manifold invariants.

\subsection*{Summary of the paper}
The purpose of this paper is to extend these previous works~\cite{GM, GPPV} to arbitrary root system $G$, thereby studying the higher rank analogues of $\hat{Z}$ and $F_K$.

We start by studying the higher rank analogue of $\hat{Z}$ in Section~\ref{sec:higherrankZhat}. Our main result is Definition~\ref{def:Zhatmaindef}, where we give a definition of $\hat{Z}_b^G$ for (weakly) negative definite plumbed $3$-manifolds. We show, in Theorem~\ref{thm:Neumanninvar}, that $\hat{Z}_b^G$ is indeed an invariant by proving that it is invariant under Neumann moves. We also provide some examples of Seifert manifolds and express their $\hat{Z}_b^G$ in terms of higher rank false theta functions.

We turn our attention to the higher rank analogue of $F_K$ in Section~\ref{sec:higherrankFK}. Since torus knot complements can be expressed as plumbings, we can deduce~$F_K^G$ for torus knots from Definition~\ref{def:Zhatmaindef}. An explicit expression is given in Theorem~\ref{thm:FKfortorusknots}. A conjectural higher rank surgery formula is also presented.

Then in Section \ref{sec:largeN}, we specialize our higher rank $F_K$ to symmetric representations. For simplicity we set $G={\rm SU}(N)$. Upon specialization to symmetric representations, we experimentally check that $F_K^{{\rm SU}(N),{\rm sym}}$ is annihilated by the quantum $A$-polynomial for symmetric representations, the $a$-deformed quantum $A$-polynomial with specialization $a=q^N$. From this observation, we are naturally led to speculate the existence of the HOMFLY-PT analogue of $F_K$. Some of the expected properties of the HOMFLY-PT analogue of $F_K$ are stated in Conjecture~\ref{HOMFLYPTFK}.

\subsection*{Notations and conventions}
We follow the convention used in \cite{GM} for knots, 3-manifolds, and colored Jones polynomials. Throughout this article, $G$ is a connected, simply connected\footnote{While we believe that our work can be further generalized to non-simply connected gauge groups by considering the lattice of characters of the maximal torus instead of the weight lattice $P$, in this article we focus on simply connected case for simplicity.} semisimple Lie group with the root system $\Delta\subset \mathfrak{h}^*$, $Q\subset \mathfrak{h}^*$ is the root lattice, $Q^\vee\subset \mathfrak{h}$ is the coroot lattice, $P\subset \mathfrak{h}^*$ is the weight lattice, $W$ is the Weyl group, $\Delta^+$ is the set of positive roots, $\rho$ denotes the Weyl vector (half-sum of positive roots), and the letters~$\alpha$ and~$\omega$ will be reserved for roots and fundamental weights. The inner product $(\cdot,\cdot)$ on $\mathfrak{h}^*$ is the standard one normalized such that $(\alpha,\alpha)=2$ for short roots~$\alpha$. The length of a Weyl group element $w\in W$ will be denoted by~$l(w)$. We use the letter $\rm B$ for the linking matrix of a plumbed 3-manifold, and $\sigma=\sigma({\rm B})$ and $\pi=\pi({\rm B})$ denote the signature and the number of positive eigenvalues of ${\rm B}$, respectively. For a multi-index monomial, we use the following notation
\[x^\beta := \prod_{1\leq j\leq r}x_j^{(\beta,\omega_j)},\]
where $r = \operatorname{rank}G$ and $\beta\in P$. When it comes to $q$-series, often we do not bother to fix the overall power of~$q$, and just use the notation $\cong$ for equivalence up to sign and overall power of~$q$.

\section[Higher rank $\hat{Z}$]{Higher rank $\boldsymbol{\hat{Z}}$}\label{sec:higherrankZhat}
\subsection[The set of labels $\mathcal{B}$]{The set of labels $\boldsymbol{\mathcal{B}}$}\label{spint}

Before getting into the definition of $\hat{Z}^G_b$ for negative definite plumbings, we have to first describe what the labels~$b$ are. In case of $G={\rm SU}(2)$, these labels $b$ were ${\rm Spin}^c$-structures (at least for plumbings on trees), as clarified by~\cite{GM}. That is, conjecturally $\hat{Z}^{{\rm SU}(2)}$ is an invariant for 3-manifolds decorated by ${\rm Spin}^c$-structures. From this, it is natural to expect that, as we generalize~$G$ to an arbitrary root system, $\hat{Z}^G$ will be an invariant of 3-manifolds decorated by structures analogous to ${\rm Spin}^c$-structures.
\begin{Definition}
 For a plumbed 3-manifold $Y=Y(\Gamma)$, define
 \begin{gather*}
 \mathcal{B}^G(Y) := \big(Q^V+\delta\big)/{\rm B}Q^V,
 \end{gather*}
 where $V = V(\Gamma)$ is the set of vertices, and $\delta_v = (2-\deg v)\rho$.
\end{Definition}
This is essentially a generalization of ${\rm Spin}^c$-structures, in a sense that $\mathcal{B}^{{\rm SU}(2)}(Y)\cong{\rm Spin}^c(Y)$ canonically. Recall that ${\rm Spin}^c(Y)$ is affinely isomorphic to $H^2(Y)$ and admits a $\mathbb{Z}_2$ action by conjugation. Similarly, two of the main features of $\mathcal{B}^G(Y)$ are that it is affinely isomorphic to~$H^2(Y;Q)$ and that it admits an action by the Weyl group $W$ (and hence carries an action by~$H^2(Y;Q)\rtimes W$).

\subsection[Higher rank $\hat{Z}$ for negative definite plumbings]{Higher rank $\boldsymbol{\hat{Z}}$ for negative definite plumbings}\label{subsec:zhatdef}

Plumbed $3$-manifolds are $3$-manifolds naturally associated to each graph whose vertices are decorated by integers. Assume for simplicity that the plumbing graph $\Gamma$ is a tree. Given a~decorated tree $\Gamma$, one can make a framed link $L_\Gamma$ by replacing each vertex by an unknot whose framing is the decoration of the vertex, and by linking any two of the unknots (in the simplest possible way) whenever the corresponding vertices are connected by an edge. The plumbed $3$-manifold with plumbing graph $\Gamma$ is the $3$-manifold obtained as the Dehn surgery on~$L_\Gamma$. Following~\cite{GM}, we call the linking matrix ${\rm B}$ of $L_\Gamma$ \emph{weakly negative definite} if~${\rm B}^{-1}$ is negative definite on the subspace spanned by vertices of degree~$\geq 3$.

We present here a formula for $\hat{Z}$ for (weakly) negative definite plumbed manifolds, with arbitrary root system $G$. The motivation for this definition is a heuristic decomposition of WRT invariants which relies heavily on the Gauss sum reciprocity formula; see Appendix \ref{Gauss}.
\begin{Definition}[higher rank $\hat{Z}$ for negative-definite plumbings]\label{def:Zhatmaindef}
For a plumbed 3-manifold $Y$ with a weakly negative definite linking matrix ${\rm B}$\footnote{We follow the convention of \cite{GM} where every edge is positively oriented (i.e., the linking is $+1$). Changing orientation of the edges will change the overall sign. The prefactor $(-1)^{|\Delta^+|\pi}$ takes care of the change of orientation of the edges under Neumann moves.} and a choice of $b\in \mathcal{B}^G(Y)$, define
\begin{gather}
\hat{Z}_b^G(Y;q) := (-1)^{|\Delta^+|\pi}q^{\frac{3\sigma-\operatorname{Tr}{\rm B}}{2}(\rho,\rho)} \nonumber \\
\hphantom{\hat{Z}_b^G(Y;q) :=}{} \times{\rm v.p.}\int_{|x_{vj}|=1}\prod_{v\in V}\prod_{1\leq j\leq r}\frac{{\rm d}x_{vj}}{2\pi{\rm i} x_{vj}}\left(\sum_{w\in W}(-1)^{l(w)} x_v^{w(\rho)}\right)^{2-\deg v}\!\! \Theta_b^{-{\rm B}}\big(x^{-1},q\big), \!\!\!\label{integralZhat}
\\
\Theta_b^{-{\rm B}}\big(x^{-1},q\big) := \sum_{\ell\in {\rm B} Q^{V}+b}q^{-\frac{1}{2}(\ell,{\rm B}^{-1}\ell)}\prod_{v\in V}x_v^{-\ell_v}. \nonumber 
\end{gather}
In particular, in case $G = {\rm SU}(N)$, this takes the following simple form:
\begin{gather*}
\hat{Z}_b^{{\rm SU}(N)}(Y;q) = (-1)^{\frac{N(N-1)}{2}\pi}q^{\frac{3\sigma -\operatorname{Tr}{\rm B}}{2}\frac{N^3-N}{12}}\\
\hphantom{\hat{Z}_b^{{\rm SU}(N)}(Y;q) =}{} \times {\rm v.p.}\oint_{|x_{vj}|=1}\prod_{v\in V}\prod_{1\leq j\leq N-1}\frac{{\rm d}x_{vj}}{2\pi{\rm i} x_{vj}} F_{3d}(x)\Theta_{2d}^{b}(x,q)
\end{gather*}
with
\begin{gather*}
F_{3d}(x) := \prod_{v\in V}\left(\sum_{w\in W}(-1)^{l(w)} \prod_{1\leq j\leq N-1}x_{vj}^{(\omega_j,w(\rho))}\right)^{2-\deg v}\nonumber\\
\hphantom{F_{3d}(x)}{} = \prod_{v\in V}\left( \prod_{1\leq j < k\leq N}\big(y_{vj}^{1/2}y_{vk}^{-1/2}-y_{vj}^{-1/2}y_{vk}^{1/2}\big) \right)^{2-\deg v},\\
\Theta_{2d}^{b}(x,q) :=\sum_{\ell\in {\rm B} Q^{V}+b}q^{-\frac{1}{2}(\ell,{\rm B}^{-1}\ell)}\prod_{v\in V}\prod_{1\leq j\leq N-1}x_{vj}^{-(\omega_j,\ell_{v})},
\end{gather*}
where $x_{j} = \frac{y_{j}}{y_{j+1}}$.
\end{Definition}
Here ``v.p.'' denotes the principal value integral. That is, taking the average over $W$ number of deformed contours, each corresponding to a Weyl chamber. For instance, for $G={\rm SU}(N)$, the deformed contour corresponding to a permutation $\sigma\in W = S_{N}$ is given by
\begin{gather*}
|y_{\sigma(1)}| < |y_{\sigma(2)}| < \cdots < |y_{\sigma(N)}|. 
\end{gather*}
In practice, this means that there are $W$ number of ways to expand the integrand into power series, and the contour integral simply picks out the constant term in the average of these power series.

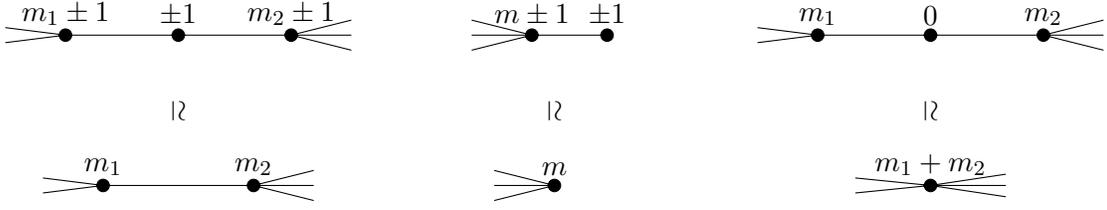
\begin{figure}[t]
 \centering
 \begin{tikzpicture}
 \draw[fill]
 (-1.5,0) node[above]{$m_1 \pm 1$} circle(0.5ex)--
 (0,0) node[above]{$\pm 1$} circle(0.5ex)--
 (1.5,0) node[above]{$m_2 \pm 1$} circle(0.5ex)
 (1.5,0) node[above]{}--(2.3,0.2) node[above]{}
 (1.5,0) node[above]{}--(2.3,0) node[above]{}
 (1.5,0) node[above]{}--(2.3,-0.2) node[above]{}
 (-2.3,0.1) node[above]{}--(-1.5,0) node[above]{}
 (-2.3,-0.1) node[above]{}--(-1.5,-0) node[above]{}
 (0,-1) node[rotate=270]{$\simeq$}
 (-1,-2) node[above]{$m_1$} circle(0.5ex)--
 (1,-2) node[above]{$m_2$} circle(0.5ex)
 (1,-2) node[above]{}--(1.8,-1.8) node[above]{}
 (1,-2) node[above]{}--(1.8,-2) node[above]{}
 (1,-2) node[above]{}--(1.8,-2.2) node[above]{}
 (-1.8,-1.9) node[above]{}--(-1,-2) node[above]{}
 (-1.8,-2.1) node[above]{}--(-1,-2) node[above]{};
 \draw[fill]
 (3.9,0) node[above]{} --
 (4.7,0) node[above]{$m \pm 1$} circle(0.5ex)--
 (5.7,0) node[above]{$\pm 1$} circle(0.5ex)
 (3.9,0.2) node[above]{}--(4.7,0) node[above]{}
 (3.9,-0.2) node[above]{}--(4.7,0) node[above]{}
 (5,-1) node[rotate=270]{$\simeq$}
 (4.2,-2) node[above]{} --
 (5,-2) node[above]{$m$} circle(0.5ex)
 (4.2,-1.8) node[above]{}--(5,-2) node[above]{}
 (4.2,-2.2) node[above]{}--(5,-2) node[above]{};
 \draw[fill]
 (8.5,0) node[above]{$m_1$} circle(0.5ex)--
 (10,0) node[above]{$0$} circle(0.5ex)--
 (11.5,0) node[above]{$m_2$} circle(0.5ex)
 (11.5,0) node[above]{}--(12.3,0.2) node[above]{}
 (11.5,0) node[above]{}--(12.3,0) node[above]{}
 (11.5,0) node[above]{}--(12.3,-0.2) node[above]{}
 (7.7,0.1) node[above]{}--(8.5,0) node[above]{}
 (7.7,-0.1) node[above]{}--(8.5,-0) node[above]{}
 (10,-1) node[rotate=270]{$\simeq$}
 (10,-2) node[above]{$m_1+m_2$} circle(0.5ex)
 (10,-2) node[above]{}--(11,-1.85) node[above]{}
 (10,-2) node[above]{}--(11,-2) node[above]{}
 (10,-2) node[above]{}--(11,-2.15) node[above]{}
 (9,-1.9) node[above]{}--(10,-2) node[above]{}
 (9,-2.1) node[above]{}--(10,-2) node[above]{};
 \end{tikzpicture}
 \caption{Neumann moves on plumbing trees.}\label{Neumann moves}
 \end{figure}

It is known by Neumann that two plumbed manifolds $Y(\Gamma)$ and $Y(\Gamma')$ are the same if and only if the plumbing graphs $\Gamma$ and $\Gamma'$ are related by a sequence of \textit{Neumann moves} in Fig.~\ref{Neumann moves}.
Therefore the following result implies that $\hat{Z}_b^G$ is a topological invariant:
\begin{Theorem}\label{thm:Neumanninvar}
The $q$-series $\hat{Z}_b^G$ defined above is invariant under Neumann moves.
\end{Theorem}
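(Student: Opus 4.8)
The statement amounts to an equality $\hat Z^G_b(Y(\Gamma))=\hat Z^G_{b'}(Y(\Gamma'))$ for each of the three moves of Fig.~\ref{Neumann moves}, under a suitable bijection $b\leftrightarrow b'$ of label sets; the inverse moves then follow automatically. In every move there is a distinguished vertex $v_0$ that is deleted (and in the third move its two neighbours are in addition merged), so the plan is to perform the inner integral over the $r$ variables $x_{v_0,1},\dots,x_{v_0,r}$ explicitly and recognise what remains as $\hat Z^G_{b'}(Y(\Gamma'))$. First I would set up the bijection $b\leftrightarrow b'$: for the first two moves the $\pm1$ diagonal entry of ${\rm B}$ at $v_0$ makes the $Q^{v_0}$-direction unimodular, so deleting that coordinate induces a canonical isomorphism $(Q^V+\delta)/{\rm B}Q^V\cong(Q^{V'}+\delta')/{\rm B}'Q^{V'}$, compatible with the value ($0$ or $\rho$) taken by $\delta_{v_0}=(2-\deg v_0)\rho$; for the third move the framing-$0$, degree-$2$ vertex is eliminated instead by a unimodular change of basis that adds its two neighbouring rows. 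Along the way I would record how $\sigma$, $\pi$ and $\operatorname{Tr}{\rm B}$ transform: the combination $3\sigma-\operatorname{Tr}{\rm B}$ is invariant under the first and third moves and shifts by $\pm1$ under the second, and these shifts, together with the change in $(-1)^{|\Delta^+|\pi}$, are precisely what the $x_{v_0}$-integral below must account for.

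For the first and third moves the factor $\big(\sum_{w\in W}(-1)^{l(w)}x_{v_0}^{w(\rho)}\big)^{2-\deg v_0}$ equals $1$ since $\deg v_0=2$, so integrating out $x_{v_0}$ acts only on $\Theta^{-{\rm B}}_b$ and simply picks out the term independent of $x_{v_0,1},\dots,x_{v_0,r}$, i.e.\ restricts the lattice sum to $\ell_{v_0}=0$. Writing $\ell={\rm B}n+\delta$ with $n\in Q^V$, this is a linear constraint on $n$. In the first move it determines $n_{v_0}$ from $n_{v_1}$ and $n_{v_2}$, and the Schur-complement identity $({\rm B}^{-1})|_{V\setminus v_0}=({\rm B}')^{-1}$ — in which ${\rm B}'$ is exactly the linking matrix of $\Gamma'$, up to orienting the new $v_1v_2$-edge — turns the quadratic form $-\frac{1}{2}(\ell,{\rm B}^{-1}\ell)$ and the monomial $\prod_v x_v^{-\ell_v}$ into those of $\Gamma'$. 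In the third move the constraint instead fixes $n_{v_1}+n_{v_2}$ and leaves $n_{v_0}$ free; as $n_{v_0}$ then drops out of the quadratic form, the residual sum $\sum_{n_{v_0}\in Q}$ is a formal delta function identifying $x_{v_1}$ with $x_{v_2}$ (up to the orientation of edges at $v_0$), which after one further integration collapses $v_1,v_2$ into a single vertex of framing $m_1+m_2$; here the reflection identity $\sum_w(-1)^{l(w)}x^{-w(\rho)}=(-1)^{|\Delta^+|}\sum_w(-1)^{l(w)}x^{w(\rho)}$ is what supplies the sign forced by the change in $\pi$.

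For the second move $v_0$ is a leaf, the extra factor is $\sum_w(-1)^{l(w)}x_{v_0}^{w(\rho)}$, and integrating it against $\Theta^{-{\rm B}}_b$ selects exactly the terms with $\ell_{v_0}$ in the Weyl orbit $W\rho$, with weight $(-1)^{l(w)}$. Writing again $\ell={\rm B}n+\delta$, the equation $\ell_{v_0}=w(\rho)$ fixes $n_{v_0}$, and after substitution the $x_{v_1}$-exponent of each surviving monomial is shifted by $w(\rho)$ over a $w$-independent base; summing over $w$ then reassembles the single extra factor $\sum_w(-1)^{l(w)}x_{v_1}^{w(\rho)}$ needed to pass from $\deg_\Gamma v_1$ to $\deg_{\Gamma'}v_1=\deg_\Gamma v_1-1$, while a short check shows that the $w$-dependent contributions to $-\frac{1}{2}(\ell,{\rm B}^{-1}\ell)$ cancel, leaving $\Theta^{-{\rm B}'}_{b'}$ times precisely the power of $q$ dictated by the shift of $3\sigma-\operatorname{Tr}{\rm B}$.

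The hard part will be the case where $v_0$ is $+1$-framed, which occurs in both the first and second moves: then the Gaussian in $\ell_{v_0}$ coming from $\Theta^{-{\rm B}}_b$ has the ``wrong'' sign, the naive lattice sum diverges, and all of the above manipulations must be carried out inside the principal-value prescription, where only the average over the $|W|$ Weyl-chamber contours is well defined. Verifying that integrating out $x_{v_0}$ commutes with this averaging, and that the required reciprocity-type resummation of the offending coordinate reproduces exactly the compensating power of $q$ together with the sign $(-1)^{|\Delta^+|}$ coming from the jump in $\pi$, is where the real work lies; once that is in place, everything else in all three moves reduces to linear algebra over the lattice $Q^V$ and routine bookkeeping with signatures, traces and Weyl signs.
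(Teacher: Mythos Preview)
Your overall strategy---integrate out the distinguished vertex $x_{v_0}$, compare quadratic forms via a Schur-complement/lattice bijection, and bookkeep $\sigma$, $\pi$, $\operatorname{Tr}{\rm B}$---is exactly the paper's approach, and your handling of the $-1$ cases and of the second move is correct. The one substantive place where you diverge is in identifying the $+1$-framed case as the ``hard part'' requiring a reciprocity-type resummation inside the principal-value prescription: this difficulty is illusory. Since $\deg v_0\leq 2$ in both the first and second moves, the factor $\big(\sum_{w}(-1)^{l(w)}x_{v_0}^{w(\rho)}\big)^{2-\deg v_0}$ is a \emph{polynomial} in $x_{v_0}$, so the $x_{v_0}$-integral merely selects finitely many values of $\ell_{v_0}$ (namely $0$, or an element of $W\rho$); there is no residual sum over $\ell_{v_0}$ and hence no wrong-sign Gaussian to worry about, irrespective of the framing. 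What actually distinguishes the $+1$ case is only that the Schur complement of ${\rm B}$ at the $+1$ entry produces ${\rm B}'$ with the new edge (or the $w(\rho)$-shift) carrying the \emph{opposite} sign, so the bijection reads $\ell'=(\vec\ell_l,-\vec\ell_r)$ rather than $(\vec\ell_l,\vec\ell_r)$. The resulting sign, via the reflection identity you already quote, is exactly $(-1)^{|\Delta^+|}$, matching the jump in $(-1)^{|\Delta^+|\pi}$. The paper dispatches both signs in one line each this way; no analytic subtlety enters.

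Your formal-delta-function argument for the third move is a legitimate way to flesh out what the paper states tersely; the paper instead writes the bijection $\ell=(\vec\ell_l,\ell_l,0,\ell_r,\vec\ell_r)\leftrightarrow\ell'=(\vec\ell_l,\ell_l-\ell_r,-\vec\ell_r)$ together with $(\ell,{\rm B}^{-1}\ell)=(\ell',{\rm B}'^{-1}\ell')$ and the sign from $\pi$, which unwinds to the same computation.
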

The proof is straightforward and is basically the same as the proof of Proposition 4.6 in \cite{GM}. So here we only give a sketch of the proof.
\begin{proof}Consider the first move. Under this move $3\sigma-\operatorname{Tr} {\rm B}$ remains unchanged. When the signs on the top are $-1$, $\pi$ does not change, and the contribution of the vector $\ell = \big(\vec{\ell}_l,0,\vec{\ell}_r\big)$ for the top graph to the theta function is the same as the contribution of the vector $\ell' = \big(\vec{\ell}_l,\vec{\ell}_r\big)$ for the bottom graph. That is,
\begin{gather}\label{Neumannproof}
 \big(\ell,{\rm B}^{-1}\ell\big) = \big(\ell',{\rm B}'^{-1}\ell'\big).
\end{gather}
When the signs on the top are $+1$, the sign of $(-1)^\pi$ changes, and the contribution of the vector $\ell = \big(\vec{\ell}_l,0,\vec{\ell}_r\big)$ for the top graph is the same as the contribution of the vector $\ell' = \big(\vec{\ell}_l,-\vec{\ell}_r\big)$ for the bottom graph in a sense of~(\ref{Neumannproof}). The effect of change of sign in~$\vec{\ell}_r$ is compensated by the change of sign in $(-1)^{|\Delta^+|\pi}$. Hence~$\hat{Z}^G$ is invariant under the first Neumann move.

Consider the second move. When the signs on the top are $-1$, $\pi$ does not change, and $3\sigma-\operatorname{Tr} {\rm B}$ increases by $1$. The contribution of the vector $\ell = \big(\vec{\ell}_l,\ell_0,w(\rho)\big)$ for the top graph is related to that of the vector $\ell' = \big(\vec{\ell}_l,\ell_0+w(\rho)\big)$ for the bottom graph via
\[\big(\ell,{\rm B}^{-1}\ell\big) = \big(\ell',{\rm B}'^{-1}\ell'\big)-(\rho,\rho).\]
The extra factor of $q^{-\frac{(\rho,\rho)}{2}}$ due to this change is cancelled out by the change in $q^{\frac{3\sigma-\operatorname{Tr} {\rm B}}{2}(\rho,\rho)}$. When the signs on the top are $+1$, both $\pi$ and $3\sigma-\operatorname{Tr} {\rm B}$ decrease by $1$. The contribution of the vector $\ell = \big(\vec{\ell}_l,\ell_0,w(\rho)\big)$ for the top graph is related to that of the vector $\ell' = \big(\vec{\ell}_l,\ell_0-w(\rho)\big)$ for the bottom graph via
\[\big(\ell,{\rm B}^{-1}\ell\big) = \big(\ell',{\rm B}'^{-1}\ell'\big)+(\rho,\rho).\]
The extra factor of $q^{\frac{(\rho,\rho)}{2}}$ due to this change is cancelled by the change in $q^{\frac{3\sigma-\operatorname{Tr} {\rm B}}{2}(\rho,\rho)}$, and the effect of change of sign in $w(\rho)$ is compensated by the change of sign in $(-1)^{|\Delta^+|\pi}$. Hence $\hat{Z}^G$ is invariant under the second Neumann move.

Consider the third move. This time $3\sigma-\operatorname{Tr} {\rm B}$ is preserved, and the sign of $(-1)^\pi$ changes. Under this move, the contribution of the vector $\ell = \big(\vec{\ell}_l,\ell_l,0,\ell_r,\vec{\ell}_r\big)$ for the top graph is the same as the contribution of the vector $\ell' = \big(\vec{\ell}_l,\ell_l-\ell_r,-\vec{\ell}_r\big)$ for the bottom graph, in a sense of~(\ref{Neumannproof}). Again, the effect of change of sign in~$\vec{\ell}_r$ is cancelled out the change of sign in~$(-1)^{|\Delta^+|\pi}$. Hence~$\hat{Z}^G$ is invariant under the third Neumann move.
\end{proof}

\subsection{Some examples and higher rank false theta functions}
\subsubsection[$Y=S_0^3(K_n)$]{$\boldsymbol{Y=S_0^3(K_n)}$}

 The 0-surgery on twist knots are probably the simplest examples. They admit simple plumbing diagrams given in Fig.~\ref{double twist knot figure}.\footnote{Although we have assumed for simplicity in Definition~\ref{def:Zhatmaindef} that the plumbing graph is a tree, we can extend this definition to plumbings with loops, as in~\cite{CGPS}.} For instance,
\begin{table}[h!]\centering
\begin{tabular}{@{\,}c@{\,}|@{\,}c@{\,}}
$G$ & $\hat{Z}_0\big(S_0^3(\mathbf{5}_2)\big)\cong$\bsep{2pt}\\
\hline
${\rm SU}(2)$ & $\frac{1}{2!}\big(1 -q +q^3 -q^6 +q^{10} -q^{15} +q^{21} -q^{28} +q^{36} -q^{45} +q^{55} -q^{66} +q^{78} -\cdots\big)$\tsep{2pt}\bsep{2pt}\\
${\rm SU}(3)$ & $\frac{1}{3!}\big(1 -2q +2q^3 +q^4 -4q^6 +2q^9 +2q^{10} +q^{12} -2q^{13} -4q^{15} +2q^{18} +2q^{19} +\cdots\big)$\tsep{2pt}\bsep{2pt}\\
${\rm SU}(4)$ & $\frac{1}{4!}\big(1 -3q +q^2 +4q^3 -2q^4 +q^5 -5q^6 -2q^7 +3q^8 +2q^9 +9q^{10} -2q^{11} -\cdots\big)$\tsep{2pt}\bsep{2pt}\\
${\rm SU}(5)$ & $\frac{1}{5!}\big(1 -4q +3q^2 +6q^3 -7q^4 -2q^5 +2q^7 -2q^8 +6q^9 +15q^{10} -12q^{11} -23q^{12} +\cdots\big)$
\end{tabular}
\end{table}

Indeed, for every positive twist knot $K_p$ the following is easy to deduce from (\ref{integralZhat}).
\begin{Proposition}\label{twist knot prop}
For the $0$-surgery on the twist knot $K_p$, its $\hat{Z}^G$ is given by
\begin{gather}\label{twist knot Zhat}
\hat{Z}_0^G(S_0^3(K_p)) \cong \frac{1}{|W|}\sum_{\ell\in P_+\cap (Q+\rho)} N_\ell \sum_{w\in W}(-1)^{l(w)}q^{\frac{1}{2}||\sqrt{p}\ell - \frac{1}{\sqrt{p}}w(\rho)||^2} =: \frac{1}{|W|}\chi_{p,\rho},
\end{gather}
where
\begin{gather*}
N_\ell := \sum_{w\in W}(-1)^{l(w)}K(w(\ell)),
\end{gather*}
and $K(\beta)$ denotes the Kostant partition function.\footnote{For example, $N_\ell$ is $ \operatorname{sgn}((\ell,\alpha_1))$ for ${\rm SU}(2)$, and $\operatorname{sgn}\big(\prod\limits_{\alpha\in \Delta_+}(\ell,\alpha)\big) \min\{|(\ell,\alpha_1)|,|(\ell,\alpha_2)|\}$ for ${\rm SU}(3)$.}
\end{Proposition}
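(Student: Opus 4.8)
The plan is to feed the plumbing presentation of $S_0^3(K_p)$ shown in Fig.~\ref{double twist knot figure} directly into formula~(\ref{integralZhat}) and to evaluate the contour integral. The plumbing graph is small: a single vertex $v_0$ of degree $3$, a couple of degree-$2$ vertices along the legs, two degree-$1$ leaves, and --- since $b_1\big(S_0^3(K_p)\big)=1$ --- a loop, so one works with the loop extension of Definition~\ref{def:Zhatmaindef} (the linking matrix ${\rm B}$ is degenerate and ${\rm B}^{-1}$ must be read in the regularized sense of~\cite{CGPS}); the twist parameter $p$ enters through the framings. I would begin by recording ${\rm B}$, $\sigma$, $\operatorname{Tr}{\rm B}$ and $\pi$ for this graph so that the prefactor $(-1)^{|\Delta^+|\pi}q^{\frac{3\sigma-\operatorname{Tr}{\rm B}}{2}(\rho,\rho)}$ is fixed; since the statement is only up to $\cong$, this prefactor only matters up to sign and an overall monomial.

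The heart of the argument is the integral. Each degree-$1$ leaf contributes a Weyl denominator $\sum_{w}(-1)^{l(w)}x_v^{w(\rho)}=\prod_{\alpha\in\Delta^+}\big(x_v^{\alpha/2}-x_v^{-\alpha/2}\big)$, each degree-$2$ vertex contributes the trivial factor $1$, and the degree-$3$ vertex $v_0$ contributes the inverse Weyl denominator $\big(\sum_w(-1)^{l(w)}x_{v_0}^{w(\rho)}\big)^{-1}$. I would integrate vertex by vertex: at a degree-$1$ or degree-$2$ vertex the integral extracts a constant term and imposes the linear constraint on the summation index dictated by ${\rm B}$ (at a leaf it moreover pins the corresponding component of the index to some $w(\rho)$, consistent with $\delta_v=\rho$ there, and produces the sign $(-1)^{l(w)}$), so that after integrating out the legs and the loop variable a single lattice index $\ell$ remains. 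The inverse Weyl denominator at $v_0$ is expanded as a power series --- one expansion per Weyl chamber --- and the principal-value prescription averages these $|W|$ expansions; this antisymmetrization is exactly what turns the chamber-dependent, Kostant-partition-function coefficients into the Weyl-alternating quantity $N_\ell$, folds the sum onto a fundamental domain (hence $\ell\in P_+$), and produces the overall $\frac1{|W|}$. The shift $\ell\in Q+\rho$ comes from $\delta_v=(2-\deg v)\rho$ together with the chosen label $b=0$.

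It then remains to identify the quadratic form. After completing the square, the exponent $-\frac12(\ell,{\rm B}^{-1}\ell)$ (regularized because of the loop) together with the linear shifts $\pm\,w(\rho)$ contributed by the leaf factors and the $q$-power in the prefactor assemble into $q^{\frac12\|\sqrt{p}\,\ell-\frac1{\sqrt p}w(\rho)\|^2}$, up to an overall power of $q$ that is swept into $\cong$. Summing the pieces gives~(\ref{twist knot Zhat}). As a sanity check, the $G=\mathrm{SU}(2)$ specialization should reproduce the known false-theta formula for these manifolds and the first row of the table above.

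The step I expect to be the main obstacle is the treatment of the loop: because ${\rm B}$ is degenerate, both the meaning of $(\ell,{\rm B}^{-1}\ell)$ and the constant-term extraction in the loop variable must be set up carefully following~\cite{CGPS}, and one has to verify that the resulting regularized pairing really does complete the square into the clean form $\|\sqrt p\,\ell-\frac1{\sqrt p}w(\rho)\|^2$. Everything else --- the Weyl-denominator identities, the geometric-series expansions of the inverse Weyl denominator, and the residue bookkeeping along the two legs --- is routine and, for $G=\mathrm{SU}(2)$, is already contained in~\cite{GPPV,GM}.
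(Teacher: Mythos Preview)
Your overall strategy --- plug the plumbing of Fig.~\ref{double twist knot figure} into~(\ref{integralZhat}), expand the inverse Weyl denominator at the unique trivalent vertex via Kostant's partition function, and complete the square --- is exactly what the paper does. However, you have misread the graph in two places, and the second of these leads you to anticipate a difficulty that does not exist.

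First, the plumbing for $K_p$ has only three vertices: the leaf $v_p$ (degree~$1$), the trivalent vertex $v_0$, and the vertex $v_{-1}$, which has degree~$2$ because both edges of the loop land there. There are not ``two degree-$1$ leaves'' nor ``a couple of degree-$2$ vertices along the legs''. Second, and more importantly, the two edges of the loop carry opposite signs ($+$ and $-$), so their contributions to the linking matrix cancel: one gets
\[
{\rm B}=\begin{pmatrix}-1&0&0\\0&0&1\\0&1&p\end{pmatrix},\qquad
{\rm B}^{-1}=\begin{pmatrix}-1&0&0\\0&-p&1\\0&1&0\end{pmatrix},
\]
which is perfectly invertible ($\det{\rm B}=1$). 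The first Betti number $b_1\big(S_0^3(K_p)\big)=1$ comes from the cycle in the graph $\Gamma$, not from ${\rm B}$ being singular. Hence there is no regularized ${\rm B}^{-1}$ to worry about, no separate ``loop variable'' to integrate, and the obstacle you flag simply isn't there.

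With the correct ${\rm B}$ the computation is shorter than you sketch. Integrating over $x_{v_{-1}}$ (degree-$2$ vertex, Weyl factor $1$) forces $\ell_{-1}=0$; integrating over $x_{v_p}$ against the Weyl denominator forces $\ell_p=w(\rho)$ with sign $(-1)^{l(w)}$; and for $\ell=(0,\ell_0,\ell_p)$ one has
\[
-\tfrac12\big(\ell,{\rm B}^{-1}\ell\big)=\tfrac12\big\|\sqrt{p}\,\ell_0-\tfrac1{\sqrt p}\,\ell_p\big\|^2-\tfrac1{2p}\|\ell_p\|^2,
\]
the last term being a constant since $\|\ell_p\|=\|\rho\|$. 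The principal-value expansion of the inverse Weyl denominator at $v_0$ then produces $N_{\ell_0}$ and the factor $\tfrac1{|W|}$ exactly as you say.
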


Note that $\chi_{p,\rho}$ is exactly the higher rank false theta function (a character of the log-VOA $W^0(p)_Q$) given in equation~(1.2) of~\cite{BM}!
Similarly for double twist knots $K_{m,n}$ with $m,n>0$,\footnote{In our notation, $K_{m,n}$ denotes the double twist knot with $m$ and $n$ full twists.}
\begin{equation}\label{double twist knot Zhat}
\hat{Z}_0^G(S_0^3(K_{m,n})) \cong \frac{1}{|W|}\chi_{m,\rho}\chi_{n,\rho}.
\end{equation}
\begin{proof}[Proof of Proposition \ref{twist knot prop}]
 The 0-surgery on $K_p$ has a simple plumbing description as shown in Fig.~\ref{double twist knot figure}.
 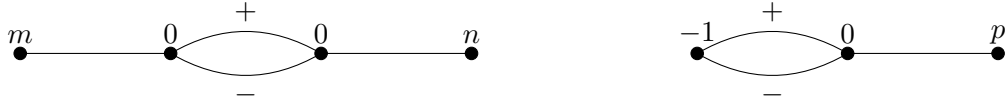
\begin{figure}[t]
 \centering
 \begin{tikzpicture}
 \draw[fill]
 (-2,0) node[above]{$m$} circle(0.5ex)--
 (0,0) node[above]{$0$} circle(0.5ex)
 (2,0) node[above]{$0$} circle(0.5ex)--
 (4,0) node[above]{$n$} circle(0.5ex);
 \draw (0,0) edge[bend left] node[midway,above]{$+$} (2,0);
 \draw (0,0) edge[bend right] node[midway,below]{$-$} (2,0);

 \draw[fill]
 (7,0) node[above]{$-1$} circle(0.5ex)
 (9,0) node[above]{$0$} circle(0.5ex)--
 (11,0) node[above]{$p$} circle(0.5ex);
 \draw (7,0) edge[bend left] node[midway,above]{$+$} (9,0);
 \draw (7,0) edge[bend right] node[midway,below]{$-$} (9,0);
 \end{tikzpicture}
 \caption{Plumbing diagrams for the 0-surgery on $K_{m,n}$ and $K_{p}=K_{1,p}$.}\label{double twist knot figure}
 \end{figure}
 The linking matrix and its inverse are
 \[{\rm B} = \begin{pmatrix} -1 & 0 & 0 \\ 0 & 0 & 1 \\ 0 & 1 & p\end{pmatrix}
 ,\qquad
 {\rm B}^{-1} = \begin{pmatrix} -1 & 0 & 0 \\ 0 & -p & 1 \\ 0 & 1 & 0\end{pmatrix}.\]
 There is a single trivalent vertex with $0$ framing. This contributes the following factor in $F_{3d}(x)$:
 \[ \bigg(\sum_{w\in W}(-1)^{l(w)}x_0^{w(\rho)}\bigg)^{-1} = \frac{1}{|W|}\sum_{\ell_0\in P_+ \cap (Q+\rho)}N_{\ell_0} \sum_{w\in W}(-1)^{l(w)}x_0^{w(\ell_0)}.\]
 For $\ell = (0,\ell_0,\ell_p)^t$,
 \[q^{-\frac{1}{2}(\ell,{\rm B}^{-1}\ell)} = q^{\frac{1}{2}||\sqrt{p}\ell_0 - \frac{1}{\sqrt{p}}\ell_p||^2 -\frac{1}{2p}||\ell_p||^2}.\]
 Applying (\ref{integralZhat}), it is straightforward to get~(\ref{twist knot Zhat}).

 Using a plumbing description of the 0-surgery on $K_{m,n}$ (Fig.~\ref{double twist knot figure}), it is easy to derive~(\ref{double twist knot Zhat}) as well.
\end{proof}

\subsubsection[$Y=\Sigma(p_1,p_2,p_3)$]{$\boldsymbol{Y=\Sigma(p_1,p_2,p_3)}$}

 For convenience let us define the following notation for \textit{higher rank false theta functions}:
\begin{gather*}
\chi_{p,\beta}^{G} := \sum_{\ell\in P_+\cap (Q+\rho)} N_\ell \sum_{w\in W}(-1)^{l(w)}q^{\frac{1}{2}||\sqrt{p}\ell - \frac{1}{\sqrt{p}}w(\beta)||^2}.
\end{gather*}
When $G={\rm SU}(2)$, this becomes
\begin{gather*}
\chi_{p,n\rho}^{{\rm SU}(2)} = \Psi_{p,p-n},\qquad \text{for} \quad n = 1, \dots, p-1,
\end{gather*}
where
\begin{gather*}
\Psi_{p,r}:= \sum_{\substack{\ell\in \mathbb{Z}\\ \ell = r\text{ mod } 2p}}\operatorname{sgn}(\ell) q^{\ell^2/4p}
\end{gather*}
is the usual false theta function, and in this sense $\chi^G_{p,\beta}$ is the higher rank generalization of the false theta functions.
\begin{Proposition}\label{Brieskorn prop}
For the Brieskorn sphere $Y = \Sigma(p_1,p_2,p_3)$ with $0 < p_1 < p_2 < p_3$ pairwise relatively prime, we have
\begin{gather*}
\hat{Z}_0^G(\Sigma(p_1,p_2,p_3)) \cong \sum_{(w_1,w_2)\in W^2}(-1)^{l(w_1w_2)}\chi_{p_1p_2p_3,p_2p_3\rho + p_1p_3w_1(\rho) + p_1p_2w_2(\rho)}.
\end{gather*}
\end{Proposition}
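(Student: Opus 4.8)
The plan is to evaluate $\hat{Z}^G$ directly from Definition~\ref{def:Zhatmaindef}, mimicking the proof of Proposition~\ref{twist knot prop}. By Theorem~\ref{thm:Neumanninvar} we may use any (weakly) negative definite plumbing of $\Sigma(p_1,p_2,p_3)$; the convenient one is the star-shaped resolution graph $\Gamma$ of the Brieskorn singularity $\{x^{p_1}+y^{p_2}+z^{p_3}=0\}$, which is genuinely negative definite, has a single trivalent central vertex $v_0$ of framing $b_0$, and three legs whose framings are the negatives of the continued-fraction coefficients of $p_i/q_i$, the Seifert data $(p_i,q_i)$ being fixed by the homology-sphere condition (so $\det{\rm B}=\pm1$; equivalently the orbifold Euler number is $-1/(p_1p_2p_3)$). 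Since ${\rm B}$ is unimodular, $\mathcal{B}^G(Y)=(Q^V+\delta)/{\rm B}Q^V$ is a single point, so $b=0$ is the only label. Moreover, as the Proposition claims equality only up to sign and overall power of $q$, throughout the computation I may discard the prefactor $(-1)^{|\Delta^+|\pi}q^{\frac{3\sigma-\operatorname{Tr}{\rm B}}{2}(\rho,\rho)}$ together with all purely $q_i$-dependent overall $q$-powers.

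In the integrand of~(\ref{integralZhat}) the central vertex contributes $\big(\sum_{w}(-1)^{l(w)}x_{v_0}^{w(\rho)}\big)^{-1}$, each of the three leaves contributes one factor $\sum_{w}(-1)^{l(w)}x^{w(\rho)}$, and each interior leg vertex contributes $1$ (there $2-\deg v=0$). The heart of the proof is to integrate out the legs. For $\ell\in{\rm B}Q^V$, integrating over the interior leg variables forces the corresponding components of $\ell$ to vanish; writing $\ell={\rm B}n$ with $n\in Q^V$, this is exactly the continued-fraction recursion $n_{v_{j-1}^{(i)}}+n_{v_{j+1}^{(i)}}=a_j^{(i)}n_{v_j^{(i)}}$ along leg $i$, which expresses that leg's contribution to $(\ell,{\rm B}^{-1}\ell)$ and to the ``momentum'' entering $v_0$ in terms of $n_{v_0}$ and the leaf variable alone. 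This is the higher-rank analogue of the Seifert-manifold reduction of~\cite{GPPV}: integrating leaf $i$'s factor $\sum_{u_i}(-1)^{l(u_i)}x^{u_i(\rho)}$ collapses the leg-$i$ sum to one term per $u_i\in W$, feeding the momentum $\tfrac1{p_i}u_i(\rho)$ into $v_0$ with sign $(-1)^{l(u_i)}$ (the $q_i$-dependence surviving only in the discarded overall $q$-power). Expanding the central factor by the reciprocal Weyl denominator / Kostant partition function identity used in the proof of Proposition~\ref{twist knot prop}, namely $\big(\sum_{w}(-1)^{l(w)}z^{w(\rho)}\big)^{-1}=\tfrac1{|W|}\sum_{\ell\in P_+\cap(Q+\rho)}N_\ell\sum_{w}(-1)^{l(w)}z^{w(\ell)}$, and performing the remaining contour integral over $z=x_{v_0}$ (which extracts the constant term and completes the square), one obtains --- with the numerical prefactors absorbed by the v.p.\ averaging exactly as there ---
\begin{gather*}
\hat{Z}_0^G(\Sigma(p_1,p_2,p_3))\cong\sum_{\ell\in P_+\cap(Q+\rho)}N_\ell\sum_{(u_1,u_2,u_3)\in W^3}(-1)^{l(u_1u_2u_3)}\\
{}\times q^{\frac{p_1p_2p_3}{2}\|\ell-u_1(\rho)/p_1-u_2(\rho)/p_2-u_3(\rho)/p_3\|^2}.
\end{gather*}

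It remains to rewrite this in the stated form. Setting $p=p_1p_2p_3$ and rewriting the exponent as $\tfrac12\big\|\sqrt p\,\ell-\tfrac1{\sqrt p}\big(p_2p_3\,u_1(\rho)+p_1p_3\,u_2(\rho)+p_1p_2\,u_3(\rho)\big)\big\|^2$, I substitute $(u_1,u_2,u_3)=(w_0,w_0w_1,w_0w_2)$ (a bijection of $W^3$); since $(-1)^{l(\cdot)}$ is a homomorphism, $(-1)^{l(u_1u_2u_3)}=(-1)^{l(w_0)}(-1)^{l(w_1w_2)}$, and $p_2p_3u_1(\rho)+p_1p_3u_2(\rho)+p_1p_2u_3(\rho)=w_0\big(p_2p_3\rho+p_1p_3w_1(\rho)+p_1p_2w_2(\rho)\big)$. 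Summing over $w_0$ and $\ell$ for fixed $(w_1,w_2)$ then produces precisely $\chi_{p_1p_2p_3,\,p_2p_3\rho+p_1p_3w_1(\rho)+p_1p_2w_2(\rho)}$, and what is left outside is $\sum_{(w_1,w_2)\in W^2}(-1)^{l(w_1w_2)}$, as claimed. The main obstacle is the leg reduction of the second paragraph: running the continued-fraction recursion in the higher-rank setting while tracking exactly the $(-1)^{l(u_i)}$ signs from the leaf factors and verifying that the leg momenta combine into $\sum_i u_i(\rho)/p_i$ with the precise quadratic form above. The usually painful $q$-power bookkeeping is, by contrast, irrelevant here thanks to the $\cong$ convention, so the real content is entirely the underlying lattice/theta-function combinatorics.
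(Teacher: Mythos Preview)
Your proposal is correct and follows essentially the same approach the paper has in mind: the paper's proof simply defers to Proposition~4.8 of~\cite{GM}, and the more detailed proof of the slightly more general Proposition~\ref{some Seifert prop} carries out exactly your leg-reduction and completing-the-square computation (with $|H_1|=1$, $b_i=p_i$). Your explicit $W^3\to W^2$ reduction via $(u_1,u_2,u_3)=(w_0,w_0w_1,w_0w_2)$ is a clean way to match the definition of $\chi_{p,\beta}$ and is more detailed than anything the paper writes down; one minor quibble is that ``$\ell={\rm B}n$ with $n\in Q^V$'' is not quite literally true because of the $\delta$-shift, but this plays no role since what you actually use is that integrating the degree-$2$ vertices forces $\ell_v=0$ there.
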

That is, it is a sum of $|W|^2$ number of higher rank false theta functions.\footnote{That $\hat{Z}$'s for Brieskorn spheres should be expressed as sums of higher rank false theta functions was envisaged earlier in \cite{CCFGH}.}
\begin{proof}
The proof is analogous to that of Proposition 4.8 in \cite{GM}.
\end{proof}
Note that we did not have to treat $\Sigma(2,3,5)$ separately. In this sense, using $\chi_{p,\beta}$ as false theta functions is more natural than using $\Psi_{p,n}$.

\subsubsection[$Y = M\big(a_0;\frac{a_1}{b_1},\frac{a_2}{b_2},\frac{a_3}{b_3}\big)$]{$\boldsymbol{Y = M\big(a_0;\frac{a_1}{b_1},\frac{a_2}{b_2},\frac{a_3}{b_3}\big)}$}

 Let $b_1,b_2,b_3>0$ and assume that $Y$ has negative orbifold number, i.e.,
\begin{equation}
e = a_0 + \sum_{j=1}^{3}\frac{a_j}{b_j} < 0.
\end{equation}
Assume further that the central meridian is trivial in homology, i.e.,
\begin{gather*}
e\operatorname{lcm}(b_1,b_2,b_3) = -1.
\end{gather*}
Then their $\hat{Z}_b$'s can be expressed as signed sum of higher rank false theta functions:
\begin{Proposition}\label{some Seifert prop}
Under the assumptions as above, $\hat{Z}^G$ for $Y=M\big(a_0;\frac{a_1}{b_1},\frac{a_2}{b_2},\frac{a_3}{b_3}\big)$ is given by
\begin{gather}
\hat{Z}_b^G\left(M\left(a_0;\frac{a_1}{b_1},\frac{a_2}{b_2},\frac{a_3}{b_3}\right)\right) \nonumber\\
\qquad{} \cong \sum_{(w_1,w_2)\in W^2}\mathbf{1}_b(w_1,w_2) (-1)^{l(w_1w_2)}\chi_{\frac{b_1b_2b_3}{|H_1|},\frac{b_2b_3}{|H_1|}\rho + \frac{b_1b_3}{|H_1|}w_1(\rho) + \frac{b_1b_2}{|H_1|}w_2(\rho)},\label{some Seifert Zhat}
\end{gather}
where
\begin{gather*}
\mathbf{1}_b(w_1,w_2) := \begin{cases} 1, &\text{if } \ell(\rho,\rho,w_1(\rho),w_2(\rho)) \in {\rm B}Q^V+b, \\ 0, &\text{otherwise.}\end{cases}
\end{gather*}
\end{Proposition}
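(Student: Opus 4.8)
The plan is to run the same computation as in the proof of Proposition~\ref{Brieskorn prop} (i.e., Proposition~4.8 of~\cite{GM}), the only genuinely new point being that we must now keep track of the label $b\in\mathcal B^G(Y)$, which is what produces the indicator $\mathbf 1_b(w_1,w_2)$. First I would fix a star-shaped plumbing tree $\Gamma$ for $Y=M\big(a_0;\frac{a_1}{b_1},\frac{a_2}{b_2},\frac{a_3}{b_3}\big)$: a single central vertex $v_0$ of degree~$3$ (framing~$a_0$) together with three legs whose framings encode the continued-fraction data of the Seifert pairs $(a_j,b_j)$; the hypothesis $e<0$ guarantees that $\Gamma$ can be chosen negative definite, so Definition~\ref{def:Zhatmaindef} applies. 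Plugging $\Gamma$ into~(\ref{integralZhat}), the central vertex contributes the factor $\big(\sum_{w\in W}(-1)^{l(w)}x_{v_0}^{w(\rho)}\big)^{-1}$, which I expand exactly as in the proof of Proposition~\ref{twist knot prop} into $\tfrac1{|W|}\sum_{\ell_0\in P_+\cap(Q+\rho)}N_{\ell_0}\sum_{w}(-1)^{l(w)}x_{v_0}^{w(\ell_0)}$; each of the three end-leaves contributes a Weyl numerator $\sum_{w_j\in W}(-1)^{l(w_j)}x^{w_j(\rho)}$; and the degree-$2$ vertices contribute $1$.

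Next I would integrate out the leg variables, starting from each leaf and working inward. Because the block of ${\rm B}$ along a leg is the tridiagonal matrix of a continued fraction, the successive constant-term extractions force the flux at each internal vertex of leg $j$ to be a fixed integral combination of the central flux and the leaf flux $w_j(\rho)$; after all leg integrations the only remaining summation variables are the central weight $\ell_0\in P_+\cap(Q+\rho)$ (weighted by $N_{\ell_0}$) and the three leaf elements $w_1,w_2,w_3\in W$. Using $W$-invariance of $\|\cdot\|$ to move the central Weyl element onto the leaf data, one of the three leaf sums becomes the internal $\sum_{w}(-1)^{l(w)}$ in the definition of $\chi^G_{p,\beta}$, the now-free central Weyl sum cancels the $\tfrac1{|W|}$, and the two remaining leaf sums assemble into $\sum_{(w_1,w_2)\in W^2}(-1)^{l(w_1w_2)}$. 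Simultaneously, using the block structure of ${\rm B}^{-1}$ and the standard continued-fraction identities, $-\tfrac12(\ell,{\rm B}^{-1}\ell)$ on the relevant coset collapses, up to an $\ell_0$-independent and $w_j$-independent constant, to the exponent $\tfrac12\big\|\sqrt p\,\ell_0-\tfrac1{\sqrt p}\beta\big\|^2$ occurring in $\chi^G_{p,\beta}$, with $p=\tfrac{b_1b_2b_3}{|H_1|}$ and $\beta=\tfrac{b_2b_3}{|H_1|}\rho+\tfrac{b_1b_3}{|H_1|}w_1(\rho)+\tfrac{b_1b_2}{|H_1|}w_2(\rho)$; that leftover constant, together with the prefactors $(-1)^{|\Delta^+|\pi}$ and $q^{\frac{3\sigma-\operatorname{Tr}{\rm B}}{2}(\rho,\rho)}$, only affects the overall sign and power of $q$ and is absorbed into $\cong$. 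The assumption $e\operatorname{lcm}(b_1,b_2,b_3)=-1$ (so that $|H_1|=|b_1b_2b_3\,e|=b_1b_2b_3/\operatorname{lcm}(b_1,b_2,b_3)$) is exactly what makes $p$ and the three coefficients of $\beta$ integers; when $Y$ is a homology sphere, $|H_1|=1$ and one recovers Proposition~\ref{Brieskorn prop}.

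Finally, the remaining ingredient is the indicator $\mathbf 1_b(w_1,w_2)$, and its origin is the domain of summation in $\Theta_b^{-{\rm B}}$: only vectors $\ell\in{\rm B}Q^V+b$ contribute. The vector that the computation assembles from the central flux $\ell_0$, the fixed leaf fluxes $\rho,w_1(\rho),w_2(\rho)$, and the leg fluxes they determine stays in a single residue class modulo ${\rm B}Q^V$ as $\ell_0$ ranges over $Q+\rho$, and this class depends only on $(w_1,w_2)$; hence whether an entire $\chi^G_{p,\beta}$ survives is governed by the $\{0,1\}$-valued $\mathbf 1_b(w_1,w_2)$ of the statement, namely whether $\ell(\rho,\rho,w_1(\rho),w_2(\rho))\in{\rm B}Q^V+b$. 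Putting the pieces together gives~(\ref{some Seifert Zhat}). I expect the main obstacle to be precisely the linear-algebra bookkeeping in the last two steps: verifying that the continued-fraction contraction of ${\rm B}^{-1}$ yields exactly the coefficients $b_jb_k/|H_1|$ and the exponent $p=b_1b_2b_3/|H_1|$, and pinning down the residue of the assembled flux modulo ${\rm B}Q^V$ so that $\mathbf 1_b$ takes the asserted form — routine but lengthy computations that parallel, mutatis mutandis, the ${\rm SU}(2)$ case in~\cite{GM, GPPV}.
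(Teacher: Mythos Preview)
Your proposal is correct and follows the same route as the paper: take the star-shaped plumbing, force $\ell_v=0$ on the degree-$2$ vertices and $\ell_j\in W(\rho)$ on the three leaves, reduce $-\tfrac12(\ell,{\rm B}^{-1}\ell)$ to the $\chi^G_{p,\beta}$ exponent with $p=b_1b_2b_3/|H_1|$, and read off $\mathbf 1_b$ from the coset condition on $\Theta_b^{-{\rm B}}$. The one refinement worth noting is the role of the hypothesis $e\operatorname{lcm}(b_1,b_2,b_3)=-1$: you invoke it to make $p$ and the coefficients $b_jb_k/|H_1|$ integral, but the paper singles it out for the more delicate reason that it guarantees $\ell(\rho,\rho,w_1(\rho),w_2(\rho))\in{\rm B}Q^V+b \Leftrightarrow \ell(\rho+Q,\rho,w_1(\rho),w_2(\rho))\in{\rm B}Q^V+b$ --- i.e., the residue class of the assembled flux is independent of the central weight $\ell_0$ --- which is precisely the assertion you make in your last paragraph without justification, and is what ensures the entire $\chi^G_{p,\beta}$ (rather than only a sub-sum) appears for each surviving $(w_1,w_2)$.
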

Observe that Proposition \ref{some Seifert prop} is a slight generalization of Proposition \ref{Brieskorn prop}.
\begin{proof}
 Since $Y$ is a Seifert manifold with $3$ singular fibers, it can be described as a star-shaped plumbing with 3 legs. The only vertices whose degree is not 2 are the central vertex and the terminal vertices. Denote by $\ell(\ell_0,\ell_1,\ell_2,\ell_3)$ an element $\ell\in {\rm B}Q^V + b$ such that
 \[\ell_v = \begin{cases} \ell_0, & v \text{ is the central vertex},\\ \ell_1, \ell_2, \ell_3, & v \text{ is the corresponding terminal vertex}, \\ 0, & \text{otherwise}.\end{cases}\]
 Then for any $\ell$ with $\ell_1,\ell_2,\ell_3 \in W(\rho)$,
 \[q^{-\frac{1}{2}(\ell,{\rm B}^{-1}\ell)} = q^{\frac{1}{2|H_1|}||\sqrt{b_1b_2b_3}\ell_0 - \frac{1}{\sqrt{b_1b_2b_3}}(b_2b_3\ell_1 + b_3b_1\ell_2 + b_1b_2\ell_3)||^2 + C}\]
 for some constant $C$ independent of $\ell$. Applying (\ref{integralZhat}), it is straightforward to obtain (\ref{some Seifert Zhat}). Note that the assumption $e\operatorname{lcm}(b_1,b_2,b_3)=-1$ was introduced so that
 \begin{gather*} \ell(\rho,\rho,w_1(\rho),w_2(\rho))\in {\rm B}Q^V+b \Leftrightarrow \ell(\rho + Q,\rho,w_1(\rho),w_2(\rho))\in {\rm B}Q^V+b.\tag*{\qed}\end{gather*}\renewcommand{\qed}{}
\end{proof}

We end this section by posing some interesting open questions arising from our definition of higher rank $\hat{Z}$.
When $G={\rm SU}(2)$, the study of modular-like properties of the $q$-series $\hat{Z}$ was initiated in \cite{BMM1, BMM2, CCFGH}. It is proved in special cases and conjectured for general negative definite plumbings that, although $\hat{Z}$ is not modular in the traditional sense, it exhibits a more exotic modular property called \emph{quantum modularity} \`a la Zagier~\cite{Z}.
Naturally, we can ask if the same is true for higher rank~$\hat{Z}$.
\begin{Question}
 What are the quantum modular properties of $\hat{Z}^G$?
\end{Question}
Another open question is to extend our definition to drop the condition of (weakly) negative definiteness. In particular, the orientation reversal of a negative definite plumbed manifold is positive definite. While for polynomial invariants, such as Jones polynomials, the orientation reversal is governed simply by the transformation $q\leftrightarrow q^{-1}$, for $q$-series invariants such transformation is ill-defined, and the orientation reversal is more subtle. This issue, in $G={\rm SU}(2)$ case, was first tackled in~\cite[Section~7]{CCFGH}, where it is observed that for some negative definite plumbed manifolds $Y$, the $\hat{Z}$ of the orientation reversal~$-Y$ can be obtained by taking the mock theta function associated to the false theta function~$\hat{Z}(Y)$. It would be very interesting to study this orientation reversal problem in our higher rank setting.
\begin{Question}
 How does $\hat{Z}^G$ behave under orientation reversal? What are the higher rank mock theta functions associated to the higher rank false theta functions?
\end{Question}

\section[Higher rank $F_K$]{Higher rank $\boldsymbol{F_K}$}\label{sec:higherrankFK}
\subsection[Review of ${\rm SU}(2)$ case]{Review of $\boldsymbol{{\rm SU}(2)}$ case}

Recently Gukov and Manolescu \cite{GM} conjectured the existence of a knot invariant $F_K(x,q)$, which is the analogue of $\hat{Z}$ for knot complements.
\begin{Conjecture}[Gukov--Manolescu \cite{GM}]\label{conj:SU2FK}
 For any knot $K$, there exists a series
 \begin{gather*}
 F_K(x,q) = \frac{1}{2!}\sum_{\substack{m\geq1\\\text{odd}}} \big(x^{m/2}-x^{-m/2}\big)f_m(q),
 \end{gather*}
 whose coefficients $f_m(q)$ are Laurent series with integer coefficients, such that the asymptotic expansion of $F_K\big(x,{\rm e}^\hbar\big)$ is the same as the Melvin--Morton--Rozansky expansion~{\rm \cite{BG,MM,R}} for colored Jones polynomials:
 \begin{gather*}
 F_K\big(x,{\rm e}^\hbar\big) = \big(x^{1/2}-x^{-1/2}\big)\sum_{j\geq 0}\frac{P_j(x)}{\Delta_K(x)^{2j+1}}\frac{\hbar^j}{j!},
 \end{gather*}
 where $P_j(x)\in \mathbb{Z}\big[x,x^{-1}\big]$, $P_0 = 1$, and $\Delta_K(x) = \nabla_K\big(x^{1/2}-x^{-1/2}\big)$ is the Alexander polynomial for $K$. In particular, in the semi-classical limit we should have
 \begin{gather*}
 \lim_{q\rightarrow 1}F_K(x,q) = \frac{x^{1/2}-x^{-1/2}}{\Delta_K(x)}.
 \end{gather*}
 Moreover, this series is annihilated by the quantum $A$-polynomial:
 \begin{gather*}
 \hat{A}_K(\hat{x},\hat{y},q)F_K(x,q) = 0.
 \end{gather*}
\end{Conjecture}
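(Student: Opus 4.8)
Because this is a conjecture of Gukov and Manolescu rather than a result proved in the present paper, a ``proof proposal'' here amounts to separating the part that follows from established theory from the part that is genuinely open. The asymptotic clause is essentially the content of the Melvin--Morton--Rozansky conjecture, now a theorem \cite{BG, MM, R}: the suitably normalized colored Jones polynomials $J_n(K;q)$ admit an expansion $\big(x^{1/2}-x^{-1/2}\big)\sum_{j\geq 0}P_j(x)\,\Delta_K(x)^{-(2j+1)}\hbar^j/j!$ with $P_j\in\mathbb{Z}\big[x,x^{-1}\big]$ and $P_0=1$. So the first step is to quote MMR for this formal series together with its leading term; that immediately yields both the $\hbar$-expansion clause and the semi-classical limit $\lim_{q\to 1}F_K(x,q)=(x^{1/2}-x^{-1/2})/\Delta_K(x)$.

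The substantive content is to upgrade the formal $\hbar$-series to a genuine two-variable object $F_K(x,q)=\tfrac12\sum_{m\text{ odd}}\big(x^{m/2}-x^{-m/2}\big)f_m(q)$ with $f_m(q)\in\mathbb{Z}((q))$. The plan is to obtain $F_K$ by resumming the colored Jones polynomials over the color: using Habiro's cyclotomic expansion one writes $J_n(K;q)$ as a $\mathbb{Z}\big[q^{\pm1}\big]$-linear combination of Habiro's basis elements $\prod_{i=1}^{k}\big(q^{1+n}-q^{i}\big)\big(q^{1-n}-q^{i}\big)$, replaces $q^{n}$ by the formal variable $x$, and reorganizes the result as a series in $x$ that converges in the $x$-adic topology. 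Integrality of the $f_m(q)$ is then immediate from Habiro's integrality, and agreement with the MMR series is a term-by-term check against the first step. This is exactly how $F_K$ is computed for the examples of \cite{GM} and, in this paper, for torus knots via the plumbing formula~(\ref{integralZhat}). For a fixed knot the task thus reduces to exhibiting such a convergent resummation; the claim that this is always possible is the genuinely open part.

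The last clause, $\hat A_K(\hat x,\hat y,q)F_K(x,q)=0$, should be derived from the AJ-type statement for colored Jones due to Garoufalidis: the sequence $\{J_n(K;q)\}$ is annihilated by a linear $q$-difference operator whose $q\to 1$ specialization recovers the $A$-polynomial (up to a factor), and $\hat A_K$ generates the corresponding recurrence ideal. The step is to transport this recurrence through the substitution $x=q^{n}$: $\hat x$ becomes multiplication by $x$ and $\hat y$ the shift $x\mapsto qx$, i.e., the color shift $n\mapsto n+1$, so a recurrence for $\{J_n\}$ descends to a $q$-difference equation for $F_K$ in $x$ once one checks that the inhomogeneous (boundary) terms of the recurrence drop out in the resummed limit. \textbf{The main obstacle} is that this rests on two inputs that are open in general --- the AJ conjecture is known only for restricted classes of knots, and commuting the infinite color-resummation with the difference operator requires precisely the $x$-adic convergence from the previous step --- so the two difficulties are entangled. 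The honest conclusion is that the conjecture reduces to (i)~a convergent color-resummation of $J_n(K;q)$ and (ii)~an AJ-type recurrence for $K$; both hold for the torus knots treated in this paper and the examples of \cite{GM}, but neither is presently known for all knots.
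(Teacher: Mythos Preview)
You correctly recognize that this is a conjecture, not a theorem, and the paper offers no proof of it; it is quoted verbatim from \cite{GM} as background for the higher-rank generalization. There is therefore nothing in the paper to compare your argument against, and your decision to separate the established MMR input from the genuinely open parts is the right way to handle the situation.

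One small inaccuracy worth flagging: you write that the Habiro cyclotomic resummation ``is exactly how $F_K$ is computed for the examples of \cite{GM}.'' It is not. In \cite{GM} the torus-knot $F_K$ comes from the plumbing integral (as in this paper), and the figure-eight case is obtained by directly resumming the MMR expansion in powers of $x$, not via Habiro's basis. Habiro's expansion is a plausible alternative route to integrality, but it is not the method actually used in the cited examples, and it is not clear that replacing $q^n\mapsto x$ in the Habiro form yields an $x$-adically convergent series in general (the products $\prod_i(q^{1+n}-q^i)(q^{1-n}-q^i)$ do not obviously become small in $x$). So your step~(i) is still open even at the level of strategy, not just verification; otherwise your assessment of where the difficulties lie is accurate.
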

In \cite{GM}, $F_K$ is computed for torus knots and the figure-8 knot, and more recently, in~\cite{P}, $F_K$ is computed for a bigger class of knots, including positive braid knots, positive double twist knots and the Whitehead link.

\subsection[Higher rank $F_K$]{Higher rank $\boldsymbol{F_K}$}

Let us study the higher rank generalization of $F_K(x,q)$. As a natural generalization of Conjecture~\ref{conj:SU2FK}, we make the following conjecture.
\begin{Conjecture}[higher rank $F_K$]
For any knot $K$ and a choice of a root system $G$, there exists a series
\begin{gather*}
F_{K}^G(\mathbf{x},q) = \frac{1}{|W|}\sum_{\beta\in P_+\cap (Q+\rho)}f_\beta^G(q)\sum_{w\in W}(-1)^{l(w)}x^{w(\beta)},
\end{gather*}
where $\mathbf{x} = (x_1,\dots,x_r)$ and the coefficients $f_\beta^G(q)$ are Laurent series with integer coefficients, such that its asymptotic expansion agrees with the higher rank Melvin--Morton--Rozansky expansion for the higher rank colored Jones polynomials
\begin{gather*}
 F_{K}^G\big(\mathbf{x},{\rm e}^\hbar\big) = \prod_{\alpha\in \Delta^+}\big(x^{\alpha/2}-x^{-\alpha/2}\big)\sum_{j\geq 0}\frac{P_j(\mathbf{x})}{\Big(\prod\limits_{\alpha\in \Delta^+} \Delta_K(x^\alpha)\Big)^{2j+1}}\frac{\hbar^j}{j!},
\end{gather*}
where $P_j(\mathbf{x})\in \mathbb{Z}\big[x_1,x_1^{-1},\dots,x_r,x_r^{-1}\big]$ and $P_0 = 1$. In particular, in the semi-classical limit we should have
\begin{gather*}
 \lim_{q\rightarrow 1}F_K^G(\mathbf{x},q) = \prod_{\alpha\in \Delta^+}\frac{x^{\alpha/2}-x^{-\alpha/2}}{\Delta_K(x^\alpha)}.
\end{gather*}
Moreover, this series should be annihilated by the $($higher rank$)$ quantum $A$-polynomial:
\begin{gather*}
 \hat{A}_K\big(\hat{x}_1,\hat{y}_1,\dots,\hat{x}_r,\hat{y}_r\big)F_K^G(\mathbf{x},q) = 0.
\end{gather*}
\end{Conjecture}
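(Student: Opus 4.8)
The plan is to follow the $G={\rm SU}(2)$ strategy of \cite{GM}: prove the statement directly for the knots that admit a plumbing description, then bootstrap via gluing. First I would treat torus knots $K=T(s,t)$. Their complements $S^3\setminus T(s,t)$ are Seifert fibered over the disc with two exceptional fibers, hence are presented by a star-shaped plumbing graph with two finite legs and one open leg whose label is left free; feeding this into the integral formula (\ref{integralZhat}) with the open leg un-integrated — so that its variable becomes the meridian holonomy $\mathbf{x}$ — produces a canonical candidate for $F_{T(s,t)}^G(\mathbf{x},q)$, which is Theorem~\ref{thm:FKfortorusknots}. The first thing to check is that this candidate has the asserted Weyl-antisymmetric shape $\frac{1}{|W|}\sum_\beta f_\beta^G(q)\sum_{w\in W}(-1)^{l(w)}x^{w(\beta)}$ with $f_\beta^G(q)$ a Laurent series with integer coefficients; this follows by regrouping the lattice sum in the $\Theta_b^{-{\rm B}}$ factor by Weyl orbits and carrying out the residue integral, exactly as in the computation of $\hat{Z}_0^G$ in the proof of Proposition~\ref{twist knot prop}.

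Second, I would verify the higher rank Melvin--Morton--Rozansky expansion and the semiclassical limit. The analytic input is Rozansky's asymptotic formula for colored Jones polynomials and its higher rank analogue, which predicts that $J^G_{K,\lambda}\big({\rm e}^\hbar\big)$, with $x^\alpha$ held fixed, organizes into $\prod_{\alpha\in\Delta^+}\Delta_K(x^\alpha)^{-1}$ times a power series in $\hbar$ whose coefficients are rational in $\mathbf{x}$. For torus knots one checks that the $\hbar\to 0$ asymptotics of the explicit $q$-series from the first step reproduce this order by order — a saddle-point / Euler--Maclaurin computation on the theta-like sums, parallel to \cite{GM} — and the leading term then yields $\lim_{q\to 1}F_K^G(\mathbf{x},q)=\prod_{\alpha\in\Delta^+}(x^{\alpha/2}-x^{-\alpha/2})/\Delta_K(x^\alpha)$.

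Third, for the quantum $A$-polynomial annihilation I would, as in Section~\ref{sec:largeN}, first restrict to symmetric representations with $G={\rm SU}(N)$: write down the $a$-deformed quantum $A$-polynomial of the torus knot (available from the HOMFLY-PT / super-$A$-polynomial recursions), set $a=q^N$, and check that it kills $F_K^{{\rm SU}(N),{\rm sym}}$. Turning the experimental verification of Section~\ref{sec:largeN} into a proof amounts to deriving this recursion directly from the closed form of $F_{T(s,t)}^{{\rm SU}(N),{\rm sym}}$, for instance by a creative-telescoping argument on the $q$-series. Beyond torus knots, I would invoke the conjectural higher rank surgery formula of Section~\ref{sec:higherrankFK} (the analogue of the gluing formulas in \cite{GM, P}): if $F_{K'}^G$ is already known and $K$ differs from $K'$ by a modification realizable by adding a plumbing leg or gluing in a solid torus, then $F_K^G$ is obtained from $F_{K'}^G$ by an explicit integral transform that preserves all the required properties, covering positive braid knots and double twist knots as in the rank-one case.

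The hard part — indeed the reason this remains a conjecture — is the very existence of $F_K^G$ for an arbitrary knot: unlike $\hat{Z}^G$ for plumbed manifolds, we have no intrinsic definition of $F_K^G$, only the web of compatibility constraints (the MMR expansion together with the quantum $A$-polynomial recursion), and it is not clear a priori that these admit an integer-coefficient solution for every $K$. Even where a natural candidate exists, proving the quantum $A$-polynomial annihilation for all representations — not merely symmetric ones, and not merely numerically — requires control of the full higher rank recursion structure of colored Jones polynomials, which is presently out of reach. So in practice this argument establishes the conjecture only for torus knots and for knots reachable from them by the gluing formula, which is exactly what the remainder of the paper carries out.
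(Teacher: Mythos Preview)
The statement you are addressing is labelled a \emph{Conjecture} in the paper, and the paper offers no proof of it; there is nothing to compare your attempt against. You recognize this yourself in your final paragraph, so your text is better read as a research program than as a proof.

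That said, it is worth being precise about which parts of your program the paper actually carries out. Your first step --- constructing $F_{T(s,t)}^G$ by leaving one leg of the plumbing integral un-integrated --- is exactly the content of Theorem~\ref{thm:FKfortorusknots} and its proof, so there your plan matches the paper. But your second and third steps go well beyond what the paper does: the paper never verifies the higher rank MMR expansion or the semiclassical limit for torus knots (the trefoil example after Theorem~\ref{thm:FKfortorusknots} checks only the $q\to 1$ limit numerically), and for the quantum $A$-polynomial the paper only reports experimental checks in the symmetric specialization (Section~\ref{sec:largeN}) --- it does not attempt the creative-telescoping argument you sketch, nor does it claim annihilation by the full higher rank operator $\hat{A}_K(\hat{x}_1,\hat{y}_1,\dots,\hat{x}_r,\hat{y}_r)$ for any knot. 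Your bootstrapping step via the surgery formula is likewise conjectural in the paper (the higher rank surgery formula is itself stated as a Conjecture). So while your outline is a sensible attack on the conjecture and is aligned with the paper's philosophy, you should not present steps two and three as if they are executed anywhere in the paper; they are genuinely open even in the torus knot case.
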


Our main result in this section is an explicit expression for $F_K^G(\mathbf{x},q)$ for torus knots.
\begin{Theorem}[higher rank $F_K$ for torus knots]\label{thm:FKfortorusknots}
For $K = T_{s,t}$, $f_\beta^G(q)$ is a monomial of deg\-ree~$\frac{(\beta,\beta)}{2st}$, up to an overall $q$-power. More precisely,
\begin{gather}
F_{T_{s,t}}^G \cong \frac{1}{|W|}\sum_{\beta\in P_+ \cap (Q+\rho)}\sum_{w\in W}(-1)^{l(w)}x^{w(\beta)}\nonumber\\
\hphantom{F_{T_{s,t}}^G \cong}{} \times\sum_{(w_1,w_2)\in W^2}(-1)^{l(w_1w_2)}\mathbf{1}(\beta,w_1,w_2)N_{\frac{1}{st}(\beta + tw_1(\rho)+sw_2(\rho))} q^{\frac{(\beta,\beta)}{2st}},\label{torus knot F_K}
\end{gather}
where
\begin{gather*}
\mathbf{1}(\beta,w_1,w_2):=\begin{cases} 1, &\text{if }\frac{1}{st}(\beta + tw_1(\rho) + sw_2(\rho)) \in P_+\cap (Q+\rho),\\ 0,&\text{otherwise}.\end{cases}
\end{gather*}
\end{Theorem}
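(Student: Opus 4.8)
The plan is to obtain $F_{T_{s,t}}^G$ from $\hat{Z}^G$ via the now-standard trick of realizing the torus knot complement as a Dehn filling of a plumbed three-manifold, just as in the $G=\mathrm{SU}(2)$ case treated by Gukov--Manolescu. First I would recall that $S^3\setminus T_{s,t}$ is Seifert fibered over a disk with two exceptional fibers, so it admits a star-shaped plumbing description with two legs corresponding to the orders $s$ and $t$ (equivalently, one can think of it as the ``unfinished'' plumbing obtained from the Seifert plumbing of a surgery on $T_{s,t}$ by deleting the vertex carrying the surgery coefficient). The central vertex of this plumbing is a trivalent vertex, and the extra variable $\mathbf{x}=(x_1,\dots,x_r)$ of $F_K^G$ is precisely the holonomy eigenvalue attached to the boundary torus, i.e., the variable living on the deleted vertex; so one plugs into formula~\eqref{integralZhat} keeping that variable unintegrated.

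The key computational steps are then as follows. One writes down the linking matrix $\mathrm B$ for the two-legged plumbing and computes $\mathrm B^{-1}$, reading off the quadratic form $(\ell,\mathrm B^{-1}\ell)$ restricted to vectors supported on the central vertex, the two terminal vertices, and the boundary vertex. As in the proof of Proposition~\ref{some Seifert prop}, a continued-fraction / Euclidean-algorithm identity for the entries of $\mathrm B^{-1}$ shows that for $\ell$ with terminal entries in $W(\rho)$ the exponent collapses to $\frac{1}{2st}\|\text{(linear combination of $\beta$, $tw_1(\rho)$, $sw_2(\rho)$)}\|^2$ plus a constant, which is where the power $q^{(\beta,\beta)/2st}$ comes from. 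The trivalent central vertex contributes a factor $\big(\sum_{w}(-1)^{l(w)}x_0^{w(\rho)}\big)^{-1}$, which I would expand using the same Weyl-denominator/Kostant-partition-function identity already used in Proposition~\ref{twist knot prop}, namely $\big(\sum_{w}(-1)^{l(w)}x_0^{w(\rho)}\big)^{-1}=\frac{1}{|W|}\sum_{\ell_0\in P_+\cap(Q+\rho)}N_{\ell_0}\sum_{w}(-1)^{l(w)}x_0^{w(\ell_0)}$. Performing the $\mathrm{v.p.}$ contour integrals over all plumbing variables except the boundary one picks out matching terms, forces the terminal labels to lie in $W(\rho)$ (producing the sum over $(w_1,w_2)\in W^2$ with sign $(-1)^{l(w_1w_2)}$), identifies $\ell_0$ with $\frac{1}{st}(\beta+tw_1(\rho)+sw_2(\rho))$ via the linear-combination constraint (this is the origin of the indicator $\mathbf 1(\beta,w_1,w_2)$ and of the coefficient $N_{\frac{1}{st}(\beta+tw_1(\rho)+sw_2(\rho))}$), and leaves the overall $\beta$-sum $\frac{1}{|W|}\sum_\beta(\cdots)\sum_w(-1)^{l(w)}x^{w(\beta)}$ as the dependence on the boundary variable. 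Assembling these pieces gives exactly~\eqref{torus knot F_K}, and since for each $\beta$ only finitely many $(w_1,w_2)$ survive the indicator, $f_\beta^G(q)$ is visibly a single monomial of degree $(\beta,\beta)/2st$.

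I expect the main obstacle to be two intertwined bookkeeping points rather than any deep difficulty. The first is making precise the passage from ``$\hat{Z}^G$ of a plumbing'' to ``$F_K^G$ of the knot complement'': one must argue that deleting the surgery vertex and leaving its variable free is legitimate and yields a well-defined $q$-series of the conjectured shape — i.e., that the unintegrated variable enters only through the antisymmetrized combination $\sum_w(-1)^{l(w)}x^{w(\beta)}$, and that the principal-value prescription (averaging over the $|W|$ Weyl-chamber contours) behaves correctly when one contour variable is held fixed. The second is the linear-algebra identity for $\mathrm B^{-1}$ that produces the clean exponent $\frac{(\beta,\beta)}{2st}$; getting the continued-fraction expansion of the two legs to combine into the single denominator $st$ (and checking that the leftover constant $C$ is independent of $\beta$, so it only affects the overall $q$-power absorbed by $\cong$) is the crux of the calculation. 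Everything else — expanding the trivalent factor, carrying the Weyl signs, and collapsing the integrals — is parallel to the already-cited Proposition~4.8 of~\cite{GM} and Propositions~\ref{twist knot prop} and~\ref{some Seifert prop} above, so I would present those steps briefly and concentrate the write-up on these two points.
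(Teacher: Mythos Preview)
Your proposal is correct and follows essentially the same route as the paper's own proof: realize $S^3\setminus T_{s,t}$ by the star-shaped plumbing with two legs and an open boundary vertex (framing $-st$), apply~\eqref{integralZhat} with that boundary variable left unintegrated, expand the trivalent central factor via the Kostant identity, and reduce the quadratic form on the sub-linking matrix to $\tfrac{(\beta,\beta)}{2st}+C$ by a continued-fraction/linear-algebra computation. The two bookkeeping points you flag (legitimacy of the ``leave one variable free'' prescription and the constancy of $C$) are exactly the places the paper spends its effort, so your emphasis is well placed.
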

\begin{proof}This can be derived either directly from (\ref{integralZhat}) by using plumbing description or by reverse-engineering using the higher rank surgery formula that we discuss below.
 Here we present a direct derivation. Recall from~\cite{GM} that the complement of $T_{s,t}$ has a plumbing description as in Fig.~\ref{torus knot figure}, where $0<t'<t$, $0<s'<s$ are chosen such that $st'\equiv -1~(\text{mod }t)$ and $ts' \equiv -1~(\text{mod }s)$.
 \begin{figure}[t] \centering
 \begin{tikzpicture}
 \draw[fill]
 (-2,0) node[above]{$-\frac{t}{t'}$} circle(0.5ex)--
 (0,0) node[above]{$-1$} circle(0.5ex) --
 (2,0) node[above]{$-\frac{s}{s'}$} circle(0.5ex);
 \draw (0,-2) node[below]{$-st$} circle(0.5ex)
 (0,-1.91) node[above]{}--(0,0);
 \end{tikzpicture}
 \caption{Complement of $T_{s,t}$.} \label{torus knot figure}
 \end{figure}
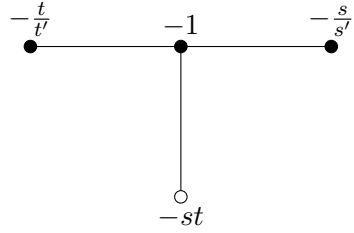
 The linking matrix is
 \[{\rm B} = \begin{pmatrix} -st & 1 & 0 & 0 \\ 1 & -1 & 1 & 1 \\ 0 & 1 & \big({-}\frac{t}{t'}\big) & 0 \\ 0 & 1 & 0 & \big({-}\frac{s}{s'}\big)\end{pmatrix},\]
 where $\big({-}\frac{t}{t'}\big)$ and $\big({-}\frac{s}{s'}\big)$ should be understood as block matrices corresponding to the continued fractions.
 To compute the integral~(\ref{integralZhat}) with $x_{-st}$ left unintegrated, we just have to replace the theta function $\Theta^{-{\rm B}}\big(x^{-1},q\big)$ with
 \[\Theta^{-{\rm B}'}\big(x^{-1},q\big) \cong \sum_{\alpha\in Q^{V'}}q^{-\frac{1}{2}(\alpha,{\rm B}'\alpha)-(\alpha,\delta)}\prod_{v\in V'}x_v^{-({\rm B}'\alpha + \delta)}\cdot x_{-st}^{-\alpha_{-1}-\rho},\]
 where $V' = V \setminus \{v_{-st}\}$ and ${\rm B}'$ is the corresponding sub-linking matrix. Set $\beta = -\alpha_{-1}-\rho$.
 We need to multiply $\Theta^{-{\rm B}'}\big(x^{-1},q\big)$ with
 \[\prod_{v\in V'}\bigg(\sum_{w\in W}(-1)^{l(w)}x_v^{w(\rho)}\bigg)^{2-\deg v}\]
and take the constant term with respect to variables $x_v$, $v\in V'$. As $2-\deg v$ is non-zero for only 3 vertices (the central vertex $v_{-1}$ and the 2 terminal vertices) it is pretty easy to compute. The only contributions come from those $\alpha$'s such that ${\rm B}'\alpha + \delta$ takes values $w_1(\rho)$, $w_2(\rho)$ on the terminal vertices for some $w_1,w_2\in W$, a value in $Q+\rho$ in the central vertex, and $0$ on all the other vertices. Using simple linear algebra, it is easy to check that for those $\alpha$'s,
 \[q^{-\frac{1}{2}(\alpha,{\rm B}'\alpha) - (\alpha,\delta)} = q^{\frac{(\beta,\beta)}{2st}+C}\]
for some constant $C$ independent of $\alpha$, and that $\frac{1}{st}(\beta + tw_1(\rho) + sw_2(\rho))$ is the value of ${\rm B}'\alpha + \delta$ on the central vertex. This proves~(\ref{torus knot F_K}).
\end{proof}

\begin{Example}[right-handed trefoil with $G = {\rm SU}(3)$] The first few $f_\beta^{{\rm SU}(3)}$ are (up to overall sign and $q$-power)
\begin{gather*}
f_{(1,1)} = -q,\qquad
f_{(4,1)} = -q^2,\qquad
f_{(5,2)} = -2q^3,\qquad
f_{(7,1)} = -q^4,\qquad
f_{(5,5)} = q^5, \nonumber\\
f_{(7,4)} = 2q^6,\qquad
f_{(10,1)} = -q^7,\qquad
f_{(8,5)} = q^8,\qquad
f_{(11,2)} = -2q^9,\qquad
f_{(7,7)} = -q^9, \nonumber\\
f_{(13,1)} = -q^{11},\qquad
f_{(11,5)} = q^{12},\qquad
\dots,
\end{gather*}
where we have written $\beta$ in the fundamental weights basis. (Because $f_{(m,n)} = f_{(n,m)}$, we have only written those terms with $m \geq n$.) The $q$-power of this $f_\beta$ is, up to overall constant,
\begin{gather*}
\frac{(\beta,\beta)}{12}.
\end{gather*}
In the $q\rightarrow 1$ limit we have, as expected,
\begin{gather*}
F_{\mathbf{3}_1^r}^{{\rm SU}(3)}(x_1,x_2,1) = \frac{x_1^{1/2}-x_1^{-1/2}}{x_1+x_1^{-1}-1}\frac{x_2^{1/2}-x_2^{-1/2}}{x_2+x_2^{-1}-1}\frac{x_1^{1/2}x_2^{1/2}-x_1^{-1/2}x_2^{-1/2}}{x_1x_2+x_1^{-1}x_2^{-1}-1}.\label{classicallimit}
\end{gather*}
\end{Example}

Just as in ${\rm SU}(2)$ case \cite{GM}, we conjecture the following surgery formula (analogous to Conjecture~1.7 of~\cite{GM}) relating $F_K^G$ to $\hat{Z}^G_b\big(S_{p/r}^3(K)\big)$:
\begin{Conjecture}[higher rank surgery formula]
Let $K\subset S^3$ be a knot. Then
\begin{gather*}
\hat{Z}_b^G\big(S_{p/r}^3(K)\big) \cong \mathcal{L}_{p/r}^{(b)}\left[\prod_{\alpha\in \Delta^+}\big(x^{\frac{\alpha}{2r}}-x^{-\frac{\alpha}{2r}}\big)F_K^G(\mathbf{x},q) \right],
\end{gather*}
whenever the RHS makes sense.
\end{Conjecture}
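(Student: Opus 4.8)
The plan is to reduce the identity to the integral definition~(\ref{integralZhat}) whenever the complement of $K$ admits a plumbing presentation — in particular for $K=T_{s,t}$, where both sides have already been computed above — and to spell out the heuristic that a general argument would follow.

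First I would make the operator $\mathcal{L}_{p/r}^{(b)}$ explicit. The surgered manifold $S^3_{p/r}(K)$ admits a plumbing presentation obtained from the plumbing graph of $S^3\setminus K$ by attaching to the meridian vertex a linear chain of framed vertices whose framings come from the continued-fraction expansion of $p/r$. In the integral~(\ref{integralZhat}) for $\hat Z_b^G$ of this enlarged graph, integrate out the variables of the original graph first: by Theorem~\ref{thm:FKfortorusknots} for torus knots — or, for a general $K$, by the defining relation $F_K^G=\hat Z^G(S^3\setminus K)$ with the meridian holonomy $\mathbf x$ left unintegrated — this leaves $F_K^G(\mathbf x,q)$, up to the overall normalization $(-1)^{|\Delta^+|\pi}q^{\frac{3\sigma-\operatorname{Tr}{\rm B}}{2}(\rho,\rho)}$. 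The remaining operation — pairing $\prod_{\alpha\in\Delta^+}\big(x^{\alpha/2r}-x^{-\alpha/2r}\big)F_K^G(\mathbf x,q)$ against the $\Theta$-factor and the Weyl-denominator factors contributed by the chain and the junction vertex, then extracting the constant term — is by definition $\mathcal{L}_{p/r}^{(b)}$ applied to that product. Carried out termwise on a monomial $x^\gamma$, it is a sequence of Gaussian lattice sums which, by the Gauss-sum reciprocity recalled in Appendix~\ref{Gauss}, collapse to a single theta-like sum (a higher rank analogue of $\Psi_{p,r}$) with Gaussian weight in $q$ and residue class pinned down by $b\in\mathcal B^G\big(S^3_{p/r}(K)\big)$.

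Second, I would verify the identity for $K=T_{s,t}$. Surgeries on $T_{s,t}$ include the Brieskorn spheres $\Sigma(p_1,p_2,p_3)$ and the Seifert manifolds of Propositions~\ref{Brieskorn prop} and~\ref{some Seifert prop}, whose $\hat Z_b^G$ we have already written as signed sums of the higher rank false theta functions $\chi^G_{p,\beta}$. Applying $\mathcal{L}_{p/r}^{(b)}$ termwise to the monomials $q^{(\beta,\beta)/2st}x^{w(\beta)}$ of~(\ref{torus knot F_K}) and completing the square in the Gaussian sum — exactly as in the proof of Proposition~\ref{twist knot prop}, now run with the meridian variable kept free — reproduces the same combinations of $\chi^G$'s; the indicator functions $\mathbf 1(\beta,w_1,w_2)$ of~(\ref{torus knot F_K}) and $\mathbf 1_b(w_1,w_2)$ of Proposition~\ref{some Seifert prop} agree because both impose the same congruence modulo the relevant lattice. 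At this level the identity is a matter of reorganizing a computation already carried out, and it is consistent with the observation (in the proof of Theorem~\ref{thm:FKfortorusknots}) that~(\ref{torus knot F_K}) can itself be reverse-engineered from the surgery formula.

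The main obstacle is the clause ``whenever the RHS makes sense''. Since $F_K^G$ is only a formal series in $\mathbf x^{\pm 1/2}$ and $\mathcal{L}_{p/r}^{(b)}$ sums over all of its monomials, one must show that the result converges coefficientwise in $q$; this is clear when the $q$-exponents attached to the monomials of $F_K^G$ grow (as for torus knots, where $f_\beta^G$ has $q$-degree $(\beta,\beta)/2st$), but is not automatic in general, and for an arbitrary knot even the existence of $F_K^G$ is still conjectural, so any general statement remains contingent on that. A secondary but unavoidable nuisance is bookkeeping the prefactors: one must check that the changes in $\sigma$, $\pi$ and $\operatorname{Tr}{\rm B}$ upon gluing the continued-fraction chain onto the plumbing of $S^3\setminus K$ match the normalizations in~(\ref{integralZhat}) and the $\alpha/2r$-rescaled Weyl factors appearing in the statement — a computation in the same spirit as the Neumann-move analysis in Theorem~\ref{thm:Neumanninvar}.
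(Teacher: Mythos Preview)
The statement is a \emph{Conjecture} in the paper, not a proven result; the paper offers no proof. Immediately after stating it, the paper simply remarks that ``this is a theorem for knots and 3-manifolds represented by negative-definite plumbings, as a straightforward generalization of Theorem~1.2 of~\cite{GM}'' and then illustrates the torus-knot case with the $\Sigma(2,3,6r+1)$ table and the short computation showing $\mathcal{L}_{-1/r}$ applied to~(\ref{torus knot F_K}) reproduces the Brieskorn false-theta expression. That is the full extent of what the paper does.

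Your proposal is therefore not so much a competing proof as an expansion of what the paper leaves implicit. Your outline for the plumbing case --- attach a continued-fraction chain to the plumbing of $S^3\setminus K$, integrate out the knot-complement variables first to obtain $F_K^G$, and identify the residual integration as the Laplace-type operator $\mathcal{L}_{p/r}^{(b)}$ --- is exactly the mechanism behind Theorem~1.2 of~\cite{GM}, so you are on the track the paper gestures at. Your torus-knot verification is likewise consistent with the paper's own check. And you are right to flag that beyond plumbings nothing can be proved: $F_K^G$ itself is only conjectural there, and the convergence caveat ``whenever the RHS makes sense'' is precisely the obstacle you name.

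One point of caution: you lean on Gauss-sum reciprocity (Appendix~\ref{Gauss}) to collapse the chain integration, but in the plumbing case this is unnecessary and slightly misleading --- the chain variables are integrated directly via the theta function, as in the Neumann-move proof, and reciprocity is only the heuristic bridge to WRT invariants, not part of the $\hat Z$-side computation. Otherwise your sketch is sound for what can actually be proved, and correctly delimits what remains conjectural.
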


This is a theorem for knots and 3-manifolds represented by negative-definite plumbings, as a~straightforward generalization of Theorem~1.2 of~\cite{GM}.
For instance, surgery on $\mathbf{3}_1^r$ gives us the following $\hat{Z}^{{\rm SU}(3)}$'s:
\begin{table}[h!]\centering
\begin{tabular}{@{\,}c@{\,}|@{\,}c@{\,}|@{\,}c@{\,}}
$r$ & $S_{-1/r}^3(\mathbf{3}_1^r)$ & $\hat{Z}_0^{{\rm SU}(3)}\big(S_{-1/r}^3(\mathbf{3}_1^r)\big)$\bsep{2pt}\\
\hline
$1$ & $\Sigma(2,3,7)$ & $1 -2q +2q^3 +q^4 -2q^5 -2q^8 + 4q^9 + 2q^{10} - 4q^{11} +2q^{13} -6q^{14} +\cdots$\tsep{2pt}\bsep{2pt}\\
$2$ & $\Sigma(2,3,13)$ & $1 -2q +2q^3 -q^4 +2q^{10} -2q^{11} -2q^{14} +2q^{16} +2q^{19} -2q^{20} +4q^{21} -\cdots$\tsep{2pt}\bsep{2pt}\\
$3$ & $\Sigma(2,3,19)$ & $1 -2q +2q^3 -q^4 +2q^{16} -2q^{17} -2q^{20} +2q^{22} +2q^{25} -2q^{26} +4q^{33} -\cdots$\tsep{2pt}\bsep{2pt}\\
$4$ & $\Sigma(2,3,25)$ & $1 -2q +2q^3 -q^4 +2q^{22} -2q^{23} -2q^{26} +2q^{28} +2q^{31} -2q^{32} +4q^{45} -\cdots$\tsep{2pt}\bsep{2pt}\\
$5$ & $\Sigma(2,3,31)$ & $1 -2q +2q^3 -q^4 +2q^{28} -2q^{29} -2q^{32} +2q^{34} +2q^{37} -2q^{38} +4q^{57} -\cdots$\tsep{2pt}\bsep{2pt}\\
$r$ & $\Sigma(2,3,6r+1)$ & $\sum\limits_{(w_1,w_2)\in W^2}(-1)^{l(w_1w_2)}\chi_{36r+6, 3(6r+1)w_1(\rho)+ 2(6r+1)w_2(\rho)+ 6\rho}$
\end{tabular}
\end{table}

In fact it is easy to check that for $K=T_{s,t}$,
\begin{gather*}
 \mathcal{L}_{-1/r}\left[\prod_{\alpha\in \Delta^+}\big(x^{\frac{\alpha}{2r}}-x^{-\frac{\alpha}{2r}}\big)F_K(\mathbf{x},q) \right]\\
\qquad {} \cong \sum_{(w_1,w_2)\in W^2}(-1)^{l(w_1w_2)}\chi_{st(rst+1),t(rst+1)w_1(\rho) + s(rst+1)w_2(\rho) +st\rho}\\
\qquad{} \cong \hat{Z}_0^G(\Sigma(s,t,rst+1)),
\end{gather*}
which is consistent with what we have seen in Proposition \ref{Brieskorn prop}.

Furthermore we conjecture that our 0-surgery formula in \cite{CGPS} holds for higher rank as well:
\begin{Conjecture}[higher rank 0-surgery formula]
Let $K\subset S^3$ be a knot. Then
\begin{gather*}
\hat{Z}_0^G\big(S_{0}^3(K)\big) \cong \frac{1}{|W|}f_\rho^G(K).
\end{gather*}
In particular, for positive double twist knots $K_{m,n}$,
\begin{gather*}
 f_\rho^G(K_{m,n}) \cong \chi_{m,\rho}\chi_{n,\rho}.
\end{gather*}
\end{Conjecture}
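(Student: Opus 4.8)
The plan is to deduce the formula from the higher rank surgery formula stated just above, specialized to $0$-framed surgery, combined with the computation of $\hat{Z}_0^G\big(S_0^3(K_{m,n})\big)$ already carried out in Proposition~\ref{twist knot prop}. Recall that the surgery formula is a genuine theorem whenever the surgered manifold is represented by a (weakly) negative definite plumbing, so the argument is rigorous in that range and heuristic in general, exactly as for $G={\rm SU}(2)$ in \cite{CGPS}.

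The first step is to unwind the transform $\mathcal{L}_0$. Using the Weyl denominator identity $\prod_{\alpha\in\Delta^+}\big(x^{\alpha/2}-x^{-\alpha/2}\big)=\sum_{w\in W}(-1)^{l(w)}x^{w(\rho)}$ and the ansatz for $F_K^G$, the series to which $\mathcal{L}_0$ is applied is
\[
\frac{1}{|W|}\sum_{\beta\in P_+\cap(Q+\rho)}f_\beta^G(q)\bigg(\sum_{w_1\in W}(-1)^{l(w_1)}x^{w_1(\rho)}\bigg)\bigg(\sum_{w_2\in W}(-1)^{l(w_2)}x^{w_2(\beta)}\bigg).
\]
Since $0$-framed surgery amounts (up to an overall power of $q$) to extracting the $\mathbf{x}$-independent part, one must solve $w_1(\rho)+w_2(\beta)=0$, i.e., $\beta=-w_2^{-1}w_1(\rho)$. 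As $\rho$ is the unique dominant weight in its Weyl orbit and $\beta$ is dominant, this forces $\beta=\rho$; then $w_1=w_2w_0$ with $w_0$ the longest element of $W$, and each of these $|W|$ pairs carries sign $(-1)^{l(w_1)+l(w_2)}=(-1)^{l(w_0)}$. Hence only the $\beta=\rho$ term of $F_K^G$ survives, and $\mathcal{L}_0$ isolates $f_\rho^G(q)$ up to sign, an overall $q$-power, and the $\frac{1}{|W|}$-normalization of $\mathcal{L}_0$ (which we fix, as in \cite{CGPS}, to be compatible with the ${\rm v.p.}$ normalization of $\hat{Z}$ in Definition~\ref{def:Zhatmaindef}). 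Together with the surgery formula this proves $\hat{Z}_0^G\big(S_0^3(K)\big)\cong\frac{1}{|W|}f_\rho^G(K)$; for $G={\rm SU}(2)$ this recovers the $0$-surgery formula of \cite{CGPS}.

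For the ``in particular'' statement I would combine this with the explicit computation already in hand: by Proposition~\ref{twist knot prop} and equation~(\ref{double twist knot Zhat}) one has $\hat{Z}_0^G\big(S_0^3(K_{m,n})\big)\cong\frac{1}{|W|}\chi_{m,\rho}\chi_{n,\rho}$ for $m,n>0$, while the $0$-surgery formula just established gives $\hat{Z}_0^G\big(S_0^3(K_{m,n})\big)\cong\frac{1}{|W|}f_\rho^G(K_{m,n})$. Both sides being honest $q$-series, $\cong$ pins down $f_\rho^G(K_{m,n})$ up to sign and an overall power of $q$, giving $f_\rho^G(K_{m,n})\cong\chi_{m,\rho}\chi_{n,\rho}$. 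As independent checks: for torus double twist knots $f_\rho^G$ is read off directly from Theorem~\ref{thm:FKfortorusknots}, and for general positive double twist knots one can extract the $\beta=\rho$ term of $F_{K_{m,n}}^G$ from a plumbing/surgery presentation of the complement (the $G={\rm SU}(2)$ computation of such $F_K$ appears in \cite{P}).

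The main obstacle is the first step. Beyond the fact that the surgery formula is known unconditionally only for (weakly) negative definite plumbings, the transform $\mathcal{L}_0$ is itself defined by a regularized limiting procedure out of $\mathcal{L}_{p/r}$, and one must verify that the purely formal constant-term extraction used above is compatible with that regularization and with the convergence of the relevant $q$-series manipulations — i.e., that exchanging the infinite sum over $\beta$ with the $\mathbf{x}$-integration is legitimate, and that the $\frac{1}{|W|}$ appears with exactly the right power. This is precisely where the $G={\rm SU}(2)$ analysis of \cite{CGPS} must be transplanted to higher rank; the representation-theoretic input (the Weyl denominator formula and uniqueness of the dominant weight in $W\rho$) is then the only genuinely higher rank ingredient, and it is elementary.
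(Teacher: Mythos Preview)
The statement you are addressing is presented in the paper as a \emph{Conjecture}, and the paper gives no proof or even a sketch; it is simply stated. So there is no ``paper's own proof'' to compare against.

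That said, your heuristic derivation is essentially the right consistency check, and it is in the spirit of how \cite{CGPS} motivates the ${\rm SU}(2)$ version. The representation-theoretic core --- that in $\sum_{w_1,w_2}(-1)^{l(w_1w_2)}x^{w_1(\rho)+w_2(\beta)}$ the constant term forces $\beta=\rho$ via $-w(\rho)=(ww_0)(\rho)$ and the regularity of $\rho$ --- is correct and is indeed the only genuinely higher-rank ingredient. Your count of the $|W|$ surviving pairs and the common sign $(-1)^{l(w_0)}$ is also right.

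Where you should be careful about calling this a proof: the argument rests entirely on the higher-rank surgery formula, which in the paper is itself a \emph{Conjecture} (a theorem only for weakly negative definite plumbings), and on a definition of $\mathcal{L}_0$ that the paper never supplies in higher rank. For the ``in particular'' clause you further need $F_K^G$ to exist for the double twist knots $K_{m,n}$, and the paper only constructs $F_K^G$ for torus knots (Theorem~\ref{thm:FKfortorusknots}); for non-torus $K_{m,n}$ the very object $f_\rho^G(K_{m,n})$ is conjectural. So your chain of implications is internally consistent but conditional on two other conjectures --- which is exactly why the paper states the $0$-surgery formula as a conjecture rather than a theorem. You are honest about this in your final paragraph, and that caveat is the correct framing.
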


\section[Symmetric representations and large $N$]{Symmetric representations and large $\boldsymbol{N}$}\label{sec:largeN}
\subsection{Specialization to symmetric representations}

In this section we study a specialization of $F_K^G({\bf x},q)$ to symmetric representations. We restrict our attention to $G={\rm SU}(N)$. We start from the reduced version of $F_K$:
\begin{gather*}
F_K^{\rm red}(\mathbf{x},q) := \frac{1}{|W|}\sum_{\beta\in P_+ \cap (Q+\rho)}f_\beta(q)\frac{\sum\limits_{w\in W}(-1)^{l(w)}x^{w(\beta)}}{\sum\limits_{w\in W}(-1)^{l(w)}x^{w(\rho)}}.
\end{gather*}
Then the (reduced) symmetrically colored $F_K$ corresponds to the following specialization:
\begin{gather*}
F_K^{\rm sym}(x,q) := F_K^{\rm red}((x,q,\dots,q),q).
\end{gather*}
That is, we set $x_2 = \cdots = x_r = q$.
A version of quantum volume conjecture~\cite{FGS} states that this should be annihilated by the symmetrically colored quantum $A$-polynomial:
\begin{gather}\label{QuantVolConj}
\hat{A}_K\big(\hat{x},\hat{y},a=q^N,q\big)F_K^{{\rm SU}(N),{\rm sym}}(x,q) = 0.
\end{gather}

\begin{Example}[right-handed trefoil] For the right-handed trefoil, $F_{\mathbf{3}_1^r}^{{\rm SU}(N),{\rm sym}}(x,q)$ for the first few values of $N$ look like the following:
\begin{itemize}\itemsep=0pt
\item For ${\rm SU}(2)$,
\begin{gather*}
F_{\mathbf{3}_1^r}^{\rm sym}(x,q) \cong \frac{1}{2}\big[ \big({-}q + q^2 + q^3 - q^6 -q^8 + q^{13} + q^{16} - \cdots\big) \\
\hphantom{F_{\mathbf{3}_1^r}^{\rm sym}(x,q) \cong}{} + \big(x + x^{-1}\big)\big(q^2 + q^3 - q^6 -q^8 + q^{13} + q^{16} - \cdots\big)\\
\hphantom{F_{\mathbf{3}_1^r}^{\rm sym}(x,q) \cong}{} + \big(x^2 + x^{-2}\big)\big(q^2 + q^3 - q^6 -q^8 + q^{13} + q^{16} - \cdots\big)\\
\hphantom{F_{\mathbf{3}_1^r}^{\rm sym}(x,q) \cong}{} + \big(x^3 + x^{-3}\big)\big(q^3 - q^6 -q^8 + q^{13} + q^{16} - \cdots\big)\\
\hphantom{F_{\mathbf{3}_1^r}^{\rm sym}(x,q) \cong}{} + \big(x^4 + x^{-4}\big)\big({-}q^6 - q^8 + q^{13} + q^{16} - \cdots\big) + \cdots \big].
\end{gather*}
\item For ${\rm SU}(3)$,
\begin{gather*}
F_{\mathbf{3}_1^r}^{\rm sym}(x,q) \cong \frac{1}{2}\big[\big({-}2q -2q^2 +2q^4 +4q^5 +4q^6 +4q^7 +2q^8 -2q^{10} -4q^{11} -\cdots\big) \\
\hphantom{F_{\mathbf{3}_1^r}^{\rm sym}(x,q) \cong}{} + \big(q^{1/2}x+q^{-1/2}x^{-1}\big)q^{1/2}\big({-}1 -2q -q^2 +q^3 +3q^4 +4q^5 +4q^6 +\cdots\big)\\
\hphantom{F_{\mathbf{3}_1^r}^{\rm sym}(x,q) \cong}{} + \big(qx^2 + q^{-1}x^{-2}\big)\big({-}q -q^2 +2q^4 +3q^5 +4q^6 +3q^7 + 2q^8 -2q^{10} +\cdots\big)\!\\
\hphantom{F_{\mathbf{3}_1^r}^{\rm sym}(x,q) \cong}{} + \big(q^{3/2}x^3 + q^{-3/2}x^{-3}\big)q^{1/2}\big(q^3 +2q^4 +3q^5 +3q^6 +2q^7 +q^8 +\cdots\big)\\
\hphantom{F_{\mathbf{3}_1^r}^{\rm sym}(x,q) \cong}{}+ \big(q^2x^4 + q^{-2}x^{-4}\big)\big(q^3 +q^4 +2q^5 +2q^6 +2q^7 +q^8 +\cdots\big) +\cdots \big].
\end{gather*}
\item For ${\rm SU}(4)$,
\begin{gather*}
F_{\mathbf{3}_1^r}^{\rm sym}(x,q) \cong \frac{1}{2}\big[\big(q^{-2} +q^{-1} -2 -4q -8q^2 -7q^3 -7q^4 +\cdots\big) \\
\hphantom{F_{\mathbf{3}_1^r}^{\rm sym}(x,q) \cong}{}+ \big(qx + q^{-1}x^{-1}\big)\big(q^{-2} -1 -5q -6q^2 -8q^3 -5q^4 -2q^5 +\cdots\big)\\
\hphantom{F_{\mathbf{3}_1^r}^{\rm sym}(x,q) \cong}{}+ \big(q^2x^2 + q^{-2}x^{-2}\big)\big({-}2 -3q -6q^2 -5q^3 -5q^4 +4q^6 +\cdots\big)\\
\hphantom{F_{\mathbf{3}_1^r}^{\rm sym}(x,q) \cong}{} + \big(q^3x^3 + q^{-3}x^{-3}\big)\big({-}q^{-1} -1 -3q -3q^2 -4q^3 -2q^4 +5q^6 +9q^7 +\cdots\!\big)\!\\
\hphantom{F_{\mathbf{3}_1^r}^{\rm sym}(x,q) \cong}{} + \big(q^4x^4 + q^{-4}x^{-4}\big)\big({-}1 -q -2q^2 -q^3 -q^4 +2q^5 +4q^6 +8q^7 +\cdots\big) \\
\hphantom{F_{\mathbf{3}_1^r}^{\rm sym}(x,q) \cong}{}+ \cdots \big].
\end{gather*}
\end{itemize}
Note that the overall factor is $\frac{1}{2}$ instead of $\frac{1}{N!}$. This is due to reduction of the Weyl symmetry to~$\mathbb{Z}_2$ as we specialize to symmetric representations.

It is easy to experimentally check (\ref{QuantVolConj}) term by term in this case, using the $a$-deformed quantum $A$-polynomial for the right-handed trefoil
\[\hat{A}_{\mathbf{3}_1^r}(\hat{x},\hat{y},a,q) = a_0 + a_1\hat{y} + a_2\hat{y}^2,\]
where
\begin{gather*}
a_0 = -\frac{(-1+\hat{x})\big({-}1+aq\hat{x}^2\big)}{a\hat{x}^3(-1+a\hat{x})\big({-}q + a\hat{x}^2\big)},\\
a_1 = \frac{\big({-}1+a\hat{x}^2\big)\big({-}a^2\hat{x}^2 + aq^3\hat{x}^2 +aq\hat{x}(1+\hat{x}+a(-1+\hat{x})\hat{x})-q^2\big(1+a^2\hat{x}^4\big)\big)}{a^2q\hat{x}^3(-1+a\hat{x})\big({-}q + a\hat{x}^2\big)},\\
a_2 = 1
\end{gather*}
with $a$ specialized to $q^N$.
\end{Example}

\subsection[Future direction: large $N$]{Future direction: large $\boldsymbol{N}$}

From (\ref{QuantVolConj}), we are naturally led to the following conjecture:
\begin{Conjecture}[HOMFLY-PT analogue of $F_K$]\label{HOMFLYPTFK}
 For each knot $K$, there exists a function $F_K(x,a,q)$ such that
 \begin{gather*}
 \hat{A}_K(\hat{x},\hat{y},a,q)F_K(x,a,q) = 0
 \end{gather*}
 and
 \begin{gather}\label{SUNspecialization}
 F_K\big(x,q^N,q\big) = F_K^{{\rm SU}(N),{\rm sym}}(x,q).
 \end{gather}
 Moreover, this function should have the following Weyl symmetry:
 \begin{gather*}
 F_{K}\big(x^{-1},a,q\big) = F_{K}\big(a^{-1}q^2x,a,q\big).
\end{gather*}
\end{Conjecture}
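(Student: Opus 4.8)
The plan is to obtain $F_K(x,a,q)$ by \emph{interpolating} the family $\big\{F_K^{{\rm SU}(N),{\rm sym}}(x,q)\big\}_{N\geq 2}$ in the variable $a=q^{N}$, and then to deduce the remaining two properties essentially for free. Write
\[
F_K^{{\rm SU}(N),{\rm sym}}(x,q)=\frac{1}{2}\sum_{j}\big(x^{j}\,g^{(N)}_{j}(q)+x^{-j}\,g^{(N)}_{-j}(q)\big)
\]
(with the appropriate half-integer shifts in $x$ and $q$, as in the trefoil examples). The crucial first step is to establish the following \emph{coherence}: for each fixed $j$, the series $g^{(N)}_{j}(q)$ is, up to a fixed power of $q$, the evaluation at $q^{N}$ of one fixed object $g_{j}(a,q)$ whose coefficient of each power of $q$ is a Laurent polynomial in $a$ of degree growing at most linearly in that power --- equivalently, the $q$-exponents occurring in $g^{(N)}_{j}$ grow at worst linearly in $N$ while the coefficients stabilise. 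Granting this, one \emph{defines} $F_K(x,a,q):=\frac{1}{2}\sum_{j}\big(x^{j}g_{j}(a,q)+x^{-j}g_{-j}(a,q)\big)$, and property (\ref{SUNspecialization}) then holds by construction. For torus knots $K=T_{s,t}$ this coherence should be extractable from the closed formula of Theorem~\ref{thm:FKfortorusknots}: after the reduction and the specialization $x_{2}=\dots=x_{r}=q$ one resums the finite sums over $w_{1},w_{2}\in W=S_{N}$, the Kostant-partition-function weights, and the Weyl characters $\sum_{w}(-1)^{l(w)}x^{w(\beta)}/\sum_{w}(-1)^{l(w)}x^{w(\rho)}$, and checks that the result is, slice by slice, a polynomial in $q^{N}$ times a fixed power of $q$. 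I expect this coherence --- in effect, the claim that the large-$N$ (HOMFLY-PT) regularity of colored link invariants survives the passage to these $q$-series --- to be the main obstacle; for a general knot, outside torus knots and the knots of \cite{P}, it is genuinely open.

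Granting the construction, the annihilation $\hat{A}_{K}(\hat{x},\hat{y},a,q)F_K(x,a,q)=0$ follows from (\ref{QuantVolConj}) by a Zariski-density argument in $a$. First clear denominators on the left, replacing $\hat{A}_K$ by an operator $\widetilde{A}_K=D(\hat{x},a,q)\,\hat{A}_K$ with coefficients in $\mathbb{Z}[a^{\pm 1},q^{\pm 1}]$, where $D$ is a polynomial multiplier; then $\widetilde{A}_K F_K=D\cdot(\hat{A}_K F_K)$. Since $\hat{x}$ (multiplication by $x$) and $\hat{y}$ (the $q$-shift of $x$) commute with the substitution $a\mapsto q^{N}$,
\[
\big[\widetilde{A}_K(\hat{x},\hat{y},a,q)F_K(x,a,q)\big]\big|_{a=q^{N}}=\widetilde{A}_K\big(\hat{x},\hat{y},q^{N},q\big)F_K^{{\rm SU}(N),{\rm sym}}(x,q)=0
\]
for every $N\geq 2$, using (\ref{QuantVolConj}) for the last equality. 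By the coherence above, each coefficient of $\widetilde{A}_K F_K$ in its $x$-expansion, collected by powers of $q$, is a single Laurent polynomial in $a$ whose degree is controlled linearly in the relevant $q$-power; the displayed vanishing at $a=q^{N}$ for all $N$ then forces --- after a routine separation-of-blocks argument using that linear bound, together with the fact that $q,q^{2},q^{3},\dots$ are pairwise distinct and $q$ is a formal parameter --- every such polynomial to vanish identically. Hence $\widetilde{A}_K F_K=0$ and therefore $\hat{A}_K F_K=0$. (Here (\ref{QuantVolConj}), which the paper verifies only experimentally, is taken as input.)

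Finally, by the same interpolation principle the Weyl symmetry $F_K(x^{-1},a,q)=F_K(a^{-1}q^{2}x,a,q)$ is equivalent to the family of identities $F_K^{{\rm SU}(N),{\rm sym}}(x^{-1},q)=F_K^{{\rm SU}(N),{\rm sym}}(q^{2-N}x,q)$, i.e.\ to invariance of the specialized reduced $F_K$ under the reflection $x\mapsto q^{2-N}/x$. This is a residual-Weyl-group statement: in $F_K^{\rm red}(\mathbf{x},q)$ the full group $W=S_{N}$ acts, because the ratios of alternating sums are genuine Weyl characters; evaluating at $x_{2}=\dots=x_{r}=q$ --- i.e.\ at $(y_{1},\dots,y_{N})=(xq^{N-2},q^{N-2},q^{N-3},\dots,q,1)$ in the $y$-coordinates --- collapses this $S_{N}$-symmetry to the $\mathbb{Z}_{2}$ generated by the longest Weyl element (which interchanges the $\omega_{1}$- and $\omega_{N-1}$-directions), twisted by the $\rho$-shift forced by the specialization, and one checks that this $\mathbb{Z}_{2}$ acts precisely by $x\mapsto q^{2-N}/x$; it is the same $\mathbb{Z}_{2}$ responsible for the $\frac{1}{2}$ prefactor noted in the text. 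A cleaner alternative, once the $a$-deformed $\hat{A}_K$-module is available, is to observe that $\hat{A}_K(\hat{x},\hat{y},a,q)$ is itself quasi-invariant under $\hat{x}\mapsto a^{-1}q^{2}\hat{x}^{-1}$, so this reflection carries any solution of $\hat{A}_K F=0$ to another; since $F_K$ is pinned down inside the finite-dimensional solution space by its classical limit $\prod_{\alpha\in\Delta^{+}}\frac{x^{\alpha/2}-x^{-\alpha/2}}{\Delta_K(x^{\alpha})}$ --- manifestly invariant under $x\mapsto a^{-1}q^{2}/x$ at $q=1$ --- together with the attached asymptotic expansion, uniqueness propagates the symmetry to all orders in $q$.
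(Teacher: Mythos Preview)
The statement you are trying to prove is a \emph{conjecture} in the paper, not a theorem; the paper offers no proof and explicitly defers its study to future work \cite{EGG+}. So there is no ``paper's own proof'' against which to compare your proposal.

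That said, your outline is not a proof either, and you essentially acknowledge this. The entire argument rests on two unproven inputs: the ``coherence'' claim (that $g_j^{(N)}(q)$ interpolates to a single $g_j(a,q)$ with the stated degree bounds) and the annihilation statement (\ref{QuantVolConj}). The paper checks (\ref{QuantVolConj}) only experimentally for the trefoil, and you yourself flag coherence as ``genuinely open'' beyond torus knots. So what you have written is a conditional reduction --- \emph{if} coherence holds and \emph{if} (\ref{QuantVolConj}) holds for all $N$, \emph{then} the first two properties follow --- rather than a proof of the conjecture. Even for torus knots, the promised extraction of coherence from Theorem~\ref{thm:FKfortorusknots} is asserted (``should be extractable'') but not carried out; the resummation over $W=S_N$ with Kostant weights after specialization is nontrivial and deserves at least a lemma.

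A further gap concerns the Weyl-symmetry clause. Your first argument claims that the residual $\mathbb{Z}_2$ inside $S_N$ acts on the specialization point by $x\mapsto q^{2-N}/x$; but the longest element of $S_N$ sends $(y_1,\dots,y_N)$ to $(y_N,\dots,y_1)$, and applying this to $(xq^{N-2},q^{N-2},\dots,q,1)$ does not simply return a point of the same shape with $x$ replaced by $q^{2-N}/x$ --- you need an additional rescaling and must track how the reduced characters transform under it. Your second argument presupposes quasi-invariance of $\hat{A}_K$ under $\hat{x}\mapsto a^{-1}q^2\hat{x}^{-1}$, which is itself a nontrivial (and, to my knowledge, not generally established) property of the $a$-deformed recursion; and the uniqueness claim ``pinned down \dots\ by its classical limit'' needs justification, since the $q$-difference equation typically has a solution space of dimension equal to the $\hat{y}$-degree, and matching the $q\to 1$ limit alone does not obviously single out one element.
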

In particular, (\ref{SUNspecialization}) implies
\begin{gather*}
\lim_{q\rightarrow 1}F_K\big(x,q^N,q\big) = \Delta_K(x)^{1-N}.
\end{gather*}
The study of this HOMFLY-PT analogue of $F_K$ is the subject of~\cite{EGG+}.

We end with a more speculative question regarding homology theories:
\begin{Question}
 What is $\mathcal{H}_{b,{\rm BPS}}^G$ categorifying $\hat{Z}_b^G(q)$? Is there a family of differentials analogous to that of~{\rm \cite{DGR}}?
\end{Question}
We will pursue these questions in our future work.

\appendix
\section{Motivation for Definition \ref{def:Zhatmaindef}}\label{Gauss}
In this section we motivate Definition \ref{def:Zhatmaindef} through a heuristic decomposition of WRT invariants. This is essentially a generalization of Appendix A of \cite{GPPV}.
Let $Y$ be a plumbing on a connected tree $\Gamma$, each vertex $v$ decorated with an integer $a_v$, with a negative definite linking matrix ${\rm B}$. Then the WRT invariant $Z_{G_k}(Y)$ can be computed by\footnote{Here we ignored a framing factor.}
\begin{gather}
Z_{G_k}(Y) \cong \sum_{\rm colorings}\prod_{v\in V}\mathcal{V}_v\prod_{e\in E}\mathcal{E}_e, \label{eq2.1}
\end{gather}
where the vertex and the edge factors are
\begin{gather*}
\mathcal{V} = t_{\lambda\lambda}^{a_v} s_{\rho\lambda}^{2-\deg v},\qquad
 \mathcal{E} = s_{\mu\lambda}.\end{gather*}
Here the $s$, $t$ matrices are as usual\footnote{See \cite[Theorem~3.3.20]{BK}.}
\begin{gather*}
s_{\lambda \mu} = \frac{{\rm i}^{|\Delta_+|}}{|P/kQ^\vee|^{1/2}}\sum_{w\in W}(-1)^{l(w)}q^{(w(\lambda),\mu)},
\qquad
t_{\lambda \mu} = \delta_{\lambda \mu} q^{\frac{1}{2}(\lambda,\lambda)}q^{-\frac{1}{2}(\rho,\rho)}
\end{gather*}
with $q={\rm e}^{\frac{2\pi{\rm i}}{m k}}$ and set of (shifted) colors being
\[\lambda,\mu\in C = \big\{\lambda\in P_+ + \rho \,\vert\, \big(\lambda,\theta^\vee\big) < k\big\}.\]
Then these $s$, $t$ matrices are invariant (up to sign) under the action of the affine Weyl group
\[W^a = W \ltimes kQ^\vee.\]
In fact, $C$ is simply the fundamental domain $P/W^a$. Using this fact, we can manipulate the form of $Z_{G_k}(Y)$ to write it in a Gauss sum reciprocity-friendly way:
\begin{gather}
\text{\eqref{eq2.1}} = \frac{1}{|W^V|} \sum_{\text{coloring}\in W(C)^{V}}\prod_{v\in V}\mathcal{V}_v\prod_{e\in E}\mathcal{E}_e \nonumber\\
\hphantom{\text{\eqref{eq2.1}}}{} = \frac{1}{|W^V|} q^{-\frac{\sum a_j}{2}(\rho,\rho)}\left( \frac{{\rm i}^{|\Delta_+|}}{|P/kQ^\vee|^{1/2}}\right)^{|V| + 1} \sum_{\lambda\in W(C)^{V}}\prod_{v\in V} \left(\sum_{w\in W}(-1)^{l(w)}q^{(\lambda_v,w(\rho))}\right)^{2-\deg v}\nonumber\\
\hphantom{\text{\eqref{eq2.1}}=}{} \times q^{\frac{a_v}{2}(\lambda_v,\lambda_v)} \prod_{(u_1,u_2)\in E}\sum_{w\in W}(-1)^{l(w)}q^{(w(\lambda_{u_1}),\lambda_{u_2})} \nonumber\\
\hphantom{\text{\eqref{eq2.1}}}{} = \frac{1}{|W|} q^{-\frac{\sum a_j}{2}(\rho,\rho)}\left( \frac{{\rm i}^{|\Delta_+|}}{|P/kQ^\vee|^{1/2}}\right)^{|V| + 1}\nonumber\\
\hphantom{\text{\eqref{eq2.1}}=}{} \times \sum_{\lambda\in W(C)^{V}}\prod_{v\in V} \left(\sum_{w\in W}(-1)^{l(w)}q^{(\lambda_v,w(\rho))}\right)^{2-\deg v} q^{\frac{1}{2}(\lambda, {\rm B}_s\lambda)}. \label{midstep}
\end{gather}

To extend the range of summation $W(C)$ to $P/kQ^\vee$, i.e., to make sense of the summation even for colors upon which the action of $W^a$ is not free, we need to regularize the linear term $\prod\limits_{v\in V} \big(\sum\limits_{w\in W}(-1)^{l(w)}q^{(\lambda_v,w(\rho))}\big)^{2-\deg v}$.
Let $\omega_1, \dots, \omega_r\in P$ be a $\mathbb{Z}$-linear basis of $P$ (e.g., fundamental weights). We can then write $\lambda_v = \sum\limits_{j = 1}^{r}n_{vj}\omega_j$ for some $n_{v1}, \dots, n_{vr}\in \mathbb{Z}$. Thanks to Weyl denominator formula, we have
\begin{gather}
 \prod_{v\in V} \left(\sum_{w\in W}(-1)^{l(w)}q^{(\lambda_v,w(\rho))}\right)^{2-\deg v} \nonumber\\
\qquad{} = \prod_{v\in V} \left( \prod_{\alpha\in \Delta_+}\big(q^{\frac{(\lambda_v,\alpha)}{2}} - q^{-\frac{(\lambda_v,\alpha)}{2}}\big) \right)^{2-\deg v} \nonumber\\
\qquad{} = \prod_{v\in V} \left( \prod_{\alpha\in \Delta_+}\left(\prod_{1\leq j\leq r}x_{vj}^{\frac{(\omega_j,\alpha)}{2}} - \prod_{1\leq j\leq r}x_{vj}^{-\frac{(\omega_j,\alpha)}{2}}\right) \right)^{2-\deg v}\bigg|_{x_{vj} = q^{n_{vj}}}.\label{regularize}
\end{gather}
In case $\deg v > 2$, this expression can be singular only when
\[\left| \prod_{1\leq j\leq r}y_j^{\frac{(\omega_j,\alpha)}{2}}\right| = 1\]
for some $\alpha\in \Delta_+$.
In terms of new variables $z_j := \log |x_j|$, this is simply
\[\sum_{1\leq j\leq r} (\omega_j,\alpha) z_j = 0.\]
These are precisely the walls (hyperplanes) for Weyl reflections. Deforming the origin $z_1 = \cdots = z_r = 0$ to a complement of these walls is the same as a choice of a~Weyl chamber. Moreover, for each choice of such a Weyl chamber, we can expand~(\ref{regularize}) as a geometric series.
Therefore, we can regularize the linear term by taking an average of $|W|$ number of $q$-series, each determined by a choice of a Weyl chamber. To sum up, we can re-express the linear term in the following form:
\begin{gather}\label{regularization}
\prod_{v\in V} \left(\sum_{w\in W}(-1)^{l(w)}q^{(\lambda_v,w(\rho))}\right)^{2-\deg v}
\overset{\text{regularize}}{=\joinrel=\joinrel=\joinrel\Rightarrow} \frac{1}{|W^V|} \sum_{\ell\in \delta + Q^{V}} n_\ell q^{(\lambda,\ell)}
\end{gather}
with $\delta_v = (2-\deg v)\rho \mod Q$ and $n_\ell\in \mathbb{Z}$.

With this regularization in hand, we extend the range of summation in \eqref{midstep} and apply the Gauss sum reciprocity.\footnote{See \cite{DT} for the version of Gauss sum reciprocity formula we use.} Then, if we choose $n\in \mathbb{Z}^+$ to be such that $nP\subseteq Q^\vee\subseteq P$, we get
\begin{gather*}
 \frac{1}{|W|} q^{-\frac{\sum a_j}{2}(\rho,\rho)}\left( \frac{{\rm i}^{|\Delta_+|}}{|P/kQ^\vee|^{1/2}}\right)^{|V| + 1} \frac{1}{|(Q^\vee/nP)^V|}\nonumber\\
\qquad\quad{}\times \sum_{\lambda \in P^{V}/nkP^{V}}q^{\frac{1}{2}(\lambda,{\rm B}\lambda)} \cdot \left(\frac{1}{|W^V|} \sum_{\ell\in \delta + Q^{V}}n_\ell q^{(\lambda,\ell)}\right) \nonumber\\
\qquad{}= \frac{(-1)^{|\Delta_+||V|}}{|W|^{|V|+1}}\left(\frac{{\rm i}^{|\Delta_+|}}{|P/Q^\vee|^{1/2}} \right)^{-|V|+1} q^{-\frac{\sum a_j}{2}(\rho,\rho)}k^{-\frac{r}{2}} \frac{{\rm e}^{\frac{\pi{\rm i}}{4}\sigma({\rm B})}}{|\det {\rm B}|^{1/2}} \nonumber\\
\qquad\quad{}\times \sum_{a\in (P^\vee)^{V}/{\rm B}(P^\vee)^{V}}{\rm e}^{-\pi{\rm i} k(a,{\rm B}^{-1}a)}\sum_{b\in (Q^{V}+\delta)/{\rm B}Q^{V}}{\rm e}^{-2\pi{\rm i}(a,{\rm B}^{-1}b)} \sum_{\ell\in {\rm B}Q^{V}+b}n_\ell q^{-\frac{1}{2}(\ell,{\rm B}^{-1}\ell)}.
\end{gather*}
Note that $a$ and $b$ takes values in different sets: $a\in \big(P^\vee\big)^{V}/{\rm B}\big(P^\vee\big)^{V}$ while $b\in \big(Q^{V}+\delta\big)/{\rm B}Q^{V}$. The $a$ labels should be understood as `Abelian flat connections' and the $b$ labels are `$\mathcal{B}^G$-structures' of Section~\ref{spint}.

To summarize, we have heuristically decomposed the WRT invariant $Z_{G_k}(Y)$ into some linear combinations of $q$-series. When compared with Conjecture~2.1 of~\cite{GPPV}, this suggests the following expression of $\hat{Z}^G$ for negative definite plumbed manifolds:\footnote{Our decomposition was only a heuristic. Although we believe that this heuristic can be made rigorously to prove Conjecture~2.1 of~\cite{GPPV} for negative definite plumbings on trees, we do not bother to do so, because Conjecture~2.1 of~\cite{GPPV} should be modified anyway for general $3$-manifolds, see~\cite{CCFGH, CGPS, GM}.}
\begin{gather*}
\hat{Z}_b^G(Y;q) \cong |W|^{-|V|}q^{-\frac{\operatorname{Tr}{\rm B}}{2}(\rho,\rho)}\sum_{\ell\in {\rm B}Q^{V} + b}n_\ell q^{-\frac{1}{2}(\ell,{\rm B}^{-1}\ell)}\;\in |W|^{-|V|} q^{\Delta_b}\mathbb{Z}[[q]], 
\end{gather*}
where
\begin{gather*}
b\in \big(Q^{V}+\delta\big)/{\rm B}Q^{V},
\qquad
\Delta_b = -\frac{\operatorname{Tr}{\rm B}}{2}(\rho,\rho) + \min_{\ell\in {\rm B}Q^{V}+b}-\frac{1}{2}\big(\ell,{\rm B}^{-1}\ell\big)\in \mathbb{Q},
\end{gather*}
and the integers $n_\ell$ are determined as in (\ref{regularization}). Moreover the higher rank analog of the $S_{ab}$ matrix in Conjecture~2.1 of~\cite{GPPV} is $S_{ab} = {\rm e}^{-2\pi{\rm i}(a,{\rm B}^{-1}b)}$ before folding by Weyl symmetry.
Note that because we have ignored the framing factor in (\ref{eq2.1}), $\Delta_b$ as defined above is only meaningful up to overall shift; only the differences $\Delta_b - \Delta_{b'}$ are meaningful values.
It is easy to check that the expression we have just arrived is equivalent to Definition \ref{def:Zhatmaindef}.

\section{Comparison with Chung's paper}
In \cite{C}, Chung studied $\hat{Z}$ for some Seifert manifolds, with $G={\rm SU}(N)$. His approach was to start from Marino's integral expression for $Z_a = \sum_{b}S_{ab}\hat{Z}_b$ in \cite{M} and decompose it into $\hat{Z}_b$'s by collecting terms whose $q$-degrees differ by an integer.
In this section we compare our result with Chung's result in some examples.
\begin{itemize}\itemsep=0pt
\item $Y = M\big({-}1;\frac{1}{2},\frac{1}{3},\frac{1}{7}\big)$. In this case $|H_1(Y)| = 1$ and there's only one homological block.
\begin{table}[h!]\centering
\begin{tabular}{c | c }
$G$ & $\hat{Z}_0(Y) \cong$\\
\hline
${\rm SU}(2)$ & $1 -q -q^5 +q^{10} -q^{11} +q^{18} +q^{30} -q^{41} +q^{43} -q^{56} -q^{76} +q^{93} -\cdots$\tsep{2pt}\bsep{2pt}\\
${\rm SU}(3)$ & $1 -2q +2q^3 +q^4 -2q^5 -2q^8 + 4q^9 + 2q^{10} - 4q^{11} +2q^{13} -6q^{14} +2q^{15} -\cdots$\tsep{2pt}\bsep{2pt}\\
${\rm SU}(4)$ & $1 -3q +q^2 +3q^3 -3q^5 -q^6 -q^7 -5q^8 +15q^9 +5q^{10} -11q^{11} -q^{12} +\cdots$
\end{tabular}
\end{table}
\item $Y = M\big({-}1;\frac{1}{2},\frac{1}{5},\frac{2}{7}\big)$. Again, $|H_1(Y)| = 1$ and there is only one homological block.
\begin{table}[h!]\centering
\begin{tabular}{c | c }
$G$ & $\hat{Z}_0(Y) \cong$\\
\hline
${\rm SU}(2)$ & $1 -q^{3} -q^{5} +q^{12} -q^{23} +q^{36} +q^{42} -q^{59} +q^{81} -q^{104} -q^{114} +q^{141} -\cdots$\tsep{2pt}\bsep{2pt}\\
${\rm SU}(3)$ & $1 -2q^3 -2q^5 +2q^6 +2q^9 +q^{12} -2q^{14} +2q^{15} -2q^{18} -3q^{20} +6q^{21} -4q^{23} -\cdots$\tsep{2pt}\bsep{2pt}\\
${\rm SU}(4)$ & $1 -3q^3 -3q^5 +5q^6 -q^7 +2q^8 +3q^9 -q^{10} -q^{12} +2q^{13} -6q^{14} +2q^{15} -\cdots$
\end{tabular}
\end{table}
\item $Y = M\big({-}1;\frac{1}{3},\frac{1}{5},\frac{3}{7}\big)$. In this case $|H_1(Y)| = 4$.
\begin{table}[h!]\centering
\begin{tabular}{c | c }
$G$ & $\hat{Z}_b(Y) \cong$\\
\hline
${\rm SU}(2)$ & $1 +q^4 +q^{16} -q^{68} +q^{144} -q^{260} -q^{320} -q^{356} +q^{484} +q^{528} +q^{612} -q^{832} +\cdots$\tsep{2pt}\\
& $-q^{15/4}\big(1 +q^6 +q^{10} +q^{12} -q^{44} -q^{48} -q^{58} -q^{88} +q^{122} +q^{164} +q^{182} +\cdots\big)$\\
& $q^{13/2}\big(1 -q^{32} -q^{56} -q^{72} +q^{136} +q^{160} +q^{208} -q^{344} +q^{496} -q^{696} -q^{792} -\cdots\big)$\bsep{2pt}\\
\hline
${\rm SU}(3)$ & $1 +3q^4 +2q^{12} +3q^{16} +2q^{28} +2q^{48} +2q^{52} +q^{64} +4q^{68} +4q^{80} +4q^{92} - \cdots$\tsep{2pt}\\
 & $-q^{-7/4}\big(2 +2q +2q^3 -4q^5 +2q^6 -2q^7 +4q^9 +2q^{10} +4q^{12} +2q^{13} -2q^{14} +\cdots\big)$\\
 & $-q^{-7}\big(1 +2q^6 -2q^8 +2q^{12} -2q^{14} +2q^{18} -2q^{20} +q^{24} -2q^{26} +2q^{28} +\cdots\big)$\\
 & $q^{-21/4}\big(1 +q -q^2 +q^4 -q^5 +q^7 +q^{10} +q^{13} -2q^{14} -q^{15} +2q^{16} +\cdots\big) \times 2$\bsep{2pt}\\
\hline
${\rm SU}(4)$ & $1 +q^{12} +8q^{16} +3q^{20} +16q^{24} +11q^{28} +15q^{32} +4q^{36} +26q^{40} +5q^{44} +\cdots$\tsep{2pt}\\
& $-q^{-15/4}\big(1 +2q -2q^2 +2q^4 -2q^5 +q^6 +4q^7 -2q^8 +3q^{10} +6q^{13} -\cdots\big)$\\
& $q^{-1}\big(2 -2q^2 -2q^4 -3q^6 -2q^8 -5q^{10} -14q^{12} -5q^{14} -4q^{16} -12q^{18} +\cdots\big)$\\
& $q^{-1/4}\big(2 +2q -2q^2 +2q^3 +q^4 -2q^5 +6q^6 +2q^7 +8q^8 +6q^9 -\cdots\big) \times 2$\\
& $-q^{3/2}\big(1 +3q^4 +2q^8 +6q^{12} +6q^{16} +4q^{20} +9q^{24} +9q^{28} +11q^{32} +\cdots\big) \times 2$\\
& $q^{-2}\big(1 +2q^4 +3q^8 +6q^{16} +6q^{20} +11q^{24} +17q^{32} +10q^{36} +9q^{40} +14q^{48} +\cdots\big)$\\
& $-q^{-15/4}\big(2 +q -2q^2 +q^3 +2q^4 -q^5 +4q^6 +4q^7 +2q^8 -2q^9 +\cdots\big)$\\
& $q^{-11/2}\big(1 +3q^6 -4q^8 +7q^{12} -2q^{14} -q^{16} +6q^{18} +3q^{20} +7q^{22} +4q^{24} +\cdots\big)$
\end{tabular}
\end{table}
\end{itemize}
Observe that \cite{C} agrees with our example computations except in the ${\rm SU}(4)$ case of the last example. Our $\hat{Z}$'s are more refined in a sense that Chung's $\hat{Z}_{1}$ is the sum of our 2nd and 7th ones and Chung's $\hat{Z}_{3}$ is the sum of our 1st and 6th ones. This example illustrates that in general we cannot fully decompose $Z_a$ into $\hat{Z}_b$'s by just collecting terms whose $q$-powers differ by an integer as done in~\cite{C}.\footnote{Still, it is possible to derive our formula (\ref{integralZhat}) from Mari\~{n}o's Chern--Simons matrix model or vice versa in case of Seifert manifolds. This is because Gaussian measure is the same as Laplace transform accompanied by the $S_{ab}$ matrix.}

\subsection*{Acknowledgements}
I would like to thank my advisor Sergei Gukov for his invaluable guidance, as well as Francesca Ferrari, Sarah Harrison, Ciprian Manolescu and Nikita Sopenko for helpful conversations.
Special thanks go to Nikita Sopenko for his kind help with Mathematica coding. I would also like to thank the anonymous referees for useful comments that helped to improve the paper.

The author was supported by Kwanjeong Educational Foundation.

\pdfbookmark[1]{References}{ref}
\LastPageEnding

\end{document}